\title{The Chromatic Number of Finite Group Cayley Tables}
\author{Luis Goddyn\thanks{Department of Mathematics, Simon Fraser University, 8888 University Drive, Burnaby BC V5A 1S6, Canada.} \and Kevin Halasz\footnotemark[1] \and
 E.S. Mahmoodian\thanks{Department of Mathematical Sciences, Sharif University of Technology,
 P.O. Box 11155--9415, Tehran, I.R. Iran }}
\date{\today}
\newcommand{\modulo}[1]{\text{ (mod } #1\text{)}}
\newcommand{\defn}[1]{\textbf{#1}}
\DeclareMathOperator{\Supp}{Supp}
\providecommand{\keywords}[1]{\textbf{{Keywords}} #1}
\theoremstyle{plain}
\newtheorem{theorem}{Theorem}[section]
\newtheorem{conjecture}[theorem]{Conjecture}
\newtheorem{lemma}[theorem]{Lemma}
\newtheorem*{claim}{Claim}
\newtheorem{proposition}[theorem]{Proposition}
\begin{document}

\maketitle

\begin{abstract}
The chromatic number of a latin square $L$, denoted $\chi(L)$, is the minimum number of partial transversals needed to cover all of its cells. It has been conjectured that every latin square satisfies $\chi(L) \leq |L|+2$. If true, this would resolve a longstanding conjecture---commonly attributed to Brualdi---that every latin square has a partial transversal of size $|L|-1$. Restricting our attention to Cayley tables of finite groups, we prove two main results. First, we resolve the chromatic number question for Cayley tables of finite Abelian groups: the Cayley table of an Abelian group $G$ has chromatic number $|G|$ or $|G|+2$, with the latter case occurring if and only if $G$ has nontrivial cyclic Sylow 2-subgroups. Second, we give an upper bound for the chromatic number of Cayley tables of arbitrary finite groups. For $|G|\geq 3$, this improves the best-known general upper bound from $2|G|$ to $\frac{3}{2}|G|$, while yielding an even stronger result in infinitely many cases.
\end{abstract}

\keywords{latin square; latin square graph; graph coloring; Cayley table; partial transversal}

\section{Introduction and preliminaries}

Let $n$ be a positive integer, let $[n]:=\{0,1,2,\ldots,n-1\}$, and let $L$ be a \defn{latin square} of order $n$, which we define as an $n \times n$ array in which each row and each column is a permutation of some set of $n$ symbols indexed by $[n]$. Let $L$ be a latin square of order $n$. We define a \defn{partial transversal} of $L$ as a collection of cells which intersects each row, each column, and each symbol class at most once. A \defn{transversal} of $L$ is a partial transversal of size $n$, and a \defn{near transversal} is a partial transversal of size $n-1$. It is well known that $L$ possesses an orthogonal mate if and only if it can be partitioned into transversals. But when $L$ does not have an orthogonal mate, can we still efficiently partition its cells into partial transversals?

\begin{figure}[h]
\begin{center}
\begin{subfigure}{0.4\textwidth}{\LARGE$L=$}
\resizebox{4.5cm}{!}{
\begin{tabular}
{|c|c|c|}
  \hline
0&1&2\\ \hline
1&2&0\\ \hline
2&0&1\\ \hline
\end{tabular}
}
\end{subfigure}
\hspace{12pt}{\LARGE$\Gamma(L)=$}
\begin{subfigure}{0.3\textwidth}
\includegraphics[scale=0.75]{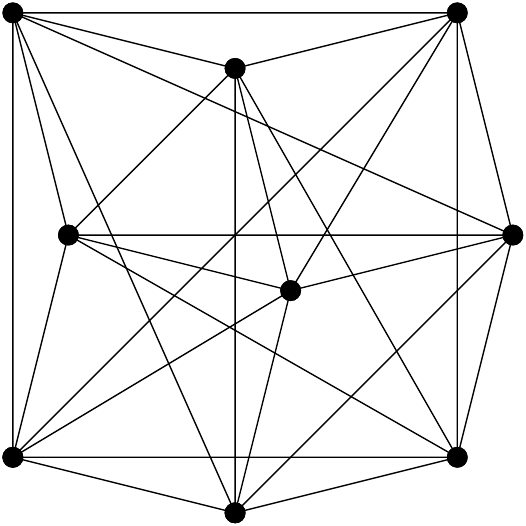}
\end{subfigure}
\caption{A latin square $L$ and its associated latin square graph $\Gamma(L)$.}\label{lsgz3}
\end{center}
\end{figure}

\vspace{-12pt}

This question can be restated in terms of graph coloring. Associated with every latin square $L$ is a strongly regular graph $\Gamma(L)$ defined on vertex set $\{(r,c,L_{r,c}) \,:\, r,c \in [n]\}$ with $(r_1,c_1,s_1) \sim (r_2,c_2,s_2)$ if and only if exactly one of $r_1=r_2$, $c_1=c_2$, or $s_1=s_2$ holds (see Figure \ref{lsgz3}). It is straightforward to check that partial transversals of $L$ correspond to independent sets in $\Gamma(L)$. Thus, the graph chromatic number $\chi(\Gamma(L))$ is the minimum number of partial transversals needed to cover all of the cells in $L$.

We refer to a partition of a latin square $L$ into $k$ partial transversals as a (proper) \defn{$k$-coloring} of $L$. The \defn{chromatic number} of $L$, denoted $\chi(L)$, is the minimum $k$ for which $L$ has a $k$-coloring. As a partial transversal has size at most $n$, $\chi(L) \geq n$. On the other hand, we may bound $\chi(L)$ from above by applying Brooks' theorem to the graph $\Gamma(L)$. 
 \begin{proposition}\label{fundthm}
 Let $L$ be a latin square of order $n\geq 3$. Then 
 \begin{equation*}n \leq \chi(L) \leq 3n-3,\label{fundineq}\end{equation*}
  with equality holding for the lower bound if and only if $L$ possesses an orthogonal mate. 
 \end{proposition}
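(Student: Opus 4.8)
The plan is to establish the three assertions separately: the lower bound $\chi(L) \geq n$, the characterization of when equality holds, and the upper bound $\chi(L) \leq 3n-3$. Throughout I would work inside the latin square graph $\Gamma(L)$, using the identification $\chi(L) = \chi(\Gamma(L))$ and the stated correspondence between partial transversals of $L$ and independent sets of $\Gamma(L)$.

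For the lower bound I would simply count vertices. The graph $\Gamma(L)$ has exactly $n^2$ vertices, and since a partial transversal meets each row at most once it has size at most $n$; hence every color class is an independent set of at most $n$ vertices. Covering $n^2$ vertices therefore requires at least $n$ classes, giving $\chi(L) \geq n$. For the equality characterization I would note that $\chi(L) = n$ forces each of the $n$ color classes to contain exactly $n$ cells, hence to be a full transversal. Thus $\chi(L) = n$ is equivalent to a partition of $L$ into $n$ transversals, which by the well-known fact recalled in the introduction is equivalent to $L$ possessing an orthogonal mate.

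For the upper bound I would invoke Brooks' theorem. First I would compute the degree of an arbitrary vertex $(r,c,s)$: it is adjacent to the $n-1$ other cells of row $r$, the $n-1$ other cells of column $c$, and the $n-1$ other cells carrying symbol $s$, and these three sets are pairwise disjoint because no two cells of a latin square share two of the three coordinates. Hence $\Gamma(L)$ is regular of degree $\Delta = 3n-3$. I would then verify the hypotheses of Brooks' theorem: $\Gamma(L)$ is connected (any two cells are joined by a path of length at most two routed through a common row and column), it is not complete when $n \geq 3$ (its degree $3n-3$ is strictly below $n^2-1$ precisely once $(n-1)(n-2) > 0$), and it is not an odd cycle (its degree exceeds $2$). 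Brooks' theorem then delivers $\chi(\Gamma(L)) \leq \Delta = 3n-3$.

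None of these steps poses a serious obstacle; the argument is entirely elementary. The two points demanding care are the disjointness claim underlying the regularity computation and the observation that the hypothesis $n \geq 3$ is exactly what excludes the complete-graph exception in Brooks' theorem. Indeed, for $n = 2$ one has $\Gamma(L) = K_4$, which is genuinely a complete graph, so the restriction $n \geq 3$ in the statement is necessary rather than cosmetic.
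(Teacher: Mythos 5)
Your proposal is correct and follows essentially the same route the paper indicates: the lower bound and its equality case come from the size-$n$ cap on partial transversals together with the standard orthogonal-mate equivalence, and the upper bound comes from applying Brooks' theorem to the connected, $(3n-3)$-regular, non-complete, non-cycle graph $\Gamma(L)$. The paper only sketches this argument in the surrounding text, and your write-up supplies the same details (regularity via the pairwise-disjoint row/column/symbol neighborhoods, and the role of $n\geq 3$ in excluding the $K_4$ exception) correctly.
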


Parts of this work have already appeared in the M.Sc.\! thesis of the second author \cite{HalaszMScThesis}, which was supervised by the first author. For any undefined terms we refer the reader to \cite{BondyMurty, DenesKeedwellBook, RobinsonBook}. Although latin square colorings are a natural generalization of the notion of possessing an orthogonal mate, the concept did not appear in the literature until very recently. In 2015, papers introducing the concept were submitted by two separate groups: Cavenagh and Kuhl \cite{CavenaghKuhl2015} and Besharati et al.\! \cite{Luisetal2016} (a group containing the first and third author of the present paper). The two groups independently conjectured the following upper bound.

\begin{conjecture}\label{chromaticconjecture}
Let $L$ be a latin square of order $n$. Then
\begin{equation*}\label{conjchrombnd}\chi(L) \leq \begin{cases} n+1 &\mbox{if } n \text{ is odd,}\\ n+2 &\mbox{if } n \text{ is even.} \end{cases} \end{equation*}
\end{conjecture}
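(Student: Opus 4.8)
The plan is to build a coloring incrementally by peeling off large cell-disjoint partial transversals and then absorbing a small residual set of cells into a bounded number of extra color classes, with the parity of $n$ dictating whether one or two extra classes are unavoidable. It is cleanest to recast the problem in the rainbow-matching model: a latin square $L$ of order $n$ is a proper $n$-edge-coloring of $K_{n,n}$ (rows and columns as the two sides, symbols as colors), a partial transversal is a rainbow matching, and the target is to cover all $n^2$ edges with $n+1$ (odd $n$) or $n+2$ (even $n$) rainbow matchings.

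The core of the argument would be a packing step: I would try to extract $n-1$ pairwise cell-disjoint partial transversals, each of size $n-1$ or $n$, covering all but $O(1)$ cells. The natural tool is a Hall/deficiency-type argument on an auxiliary bipartite availability graph recording, for the current residual partial latin square, which symbols may still be placed in which rows and columns. The required quantitative input is a defect bound guaranteeing that after removing $t$ disjoint near-transversals the residual square still admits a rainbow matching of size $n - O(1)$, so that iterating leaves only a constant number of uncovered cells. These leftover cells would then either be redistributed among the classes already built or placed into one further class (odd case) versus two (even case).

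The parity split should come from an obstruction argument. For many even-order squares—cyclic groups of even order being the motivating example—one can show no transversal exists at all, so every maximum color class has size at most $n-1$; a counting argument ($n^2$ cells against classes of size $\le n-1$) then forces the extra $(n+2)$-nd class, matching the realized value $|G|+2$ in the Abelian classification quoted above. In the odd case, where transversals are abundant, the residual can be absorbed with only a single extra class, giving the $n+1$ bound. Verifying that the peeling leaves a residual small enough to respect exactly the odd/even threshold is the delicate bookkeeping.

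The main obstacle is that this conjecture is strictly stronger than Brualdi's conjecture and hence open: since $n^2/(n+2) > n-2$ for $n \geq 3$, any valid $(n+2)$-coloring already produces a partial transversal of size at least $n-1$, so a complete proof would resolve a longstanding open problem. The genuine difficulty lives in the packing step, where worst-case latin squares poor in disjoint transversals may cause the greedy availability argument to lose more than a bounded number of cells per iteration, so that the residual need not stay $O(1)$. Controlling this for arbitrary $L$—rather than for the structured Cayley tables where symmetry makes disjoint transversals plentiful—is precisely where the techniques behind the Abelian resolution and the general-group bound of this paper do not suffice, and I do not expect the above strategy to close the gap in full generality.
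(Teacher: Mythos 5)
The statement you were asked to prove is Conjecture \ref{chromaticconjecture}; the paper does not prove it. It explicitly remarks that the conjecture is likely difficult because it implies the longstanding Brualdi--Ryser conjectures (Conjecture \ref{RBSconj}), and the paper's actual contributions are special cases: an exact determination of $\chi(G)$ for Abelian groups (Theorem \ref{abelianchrom}) and the bound $\chi(G)\le\tfrac{3}{2}|G|$ for general groups (Theorem \ref{brooksbound}). So there is no proof in the paper to compare yours against, and your own closing admission---that your strategy cannot be completed in general---is the correct assessment of the situation. Still, it is worth pinpointing where the sketch breaks, because the gaps are not mere bookkeeping.

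Two concrete gaps. First, your packing step \emph{is} the open content of the problem, not a reduction of it: you ask to extract $n-1$ pairwise disjoint partial transversals, each of size $n-1$ or $n$, leaving $O(1)$ cells uncovered. No Hall/deficiency-type argument is known to produce even a \emph{single} partial transversal of size $n-1$ in an arbitrary latin square---that is exactly Conjecture \ref{RBSconj}.1---let alone a disjoint packing of them; and once cells are deleted, the residual array is no longer a latin square, so the row/column/symbol degrees lose the regularity that any deficiency count would rely on. The peeling/semi-random technology that does apply here is the Pippenger--Spencer theorem, and it yields only $\chi(L)=n+o(n)$ (Theorem \ref{pipspenc}), qualitatively weaker than $n+O(1)$. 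Second, your parity argument points in the wrong direction: showing that an even-order square has no transversal shows every color class has size at most $n-1$, whence a counting argument gives $\chi(L)\ge n+2$. That is a \emph{lower} bound---it explains why the even case of the conjecture cannot be strengthened to $n+1$ (this is how the paper uses Theorem \ref{bigequiv})---but it contributes nothing toward the required upper bound $\chi(L)\le n+2$. Likewise, in the odd case the ``abundance of transversals'' you invoke is unavailable: the existence of even one transversal in every odd-order latin square is the open Ryser conjecture, Conjecture \ref{RBSconj}.2. Where the paper succeeds (Cayley tables of Abelian groups), it does so not by peeling transversals but by exploiting group structure: a decomposition of $\Gamma(G)$ into $\tfrac{n}{2}$ M\"{o}bius ladders plus a pair of independent sets whose removal makes each ladder bipartite---machinery with no analogue for arbitrary $L$.
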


Latin squares for which this conjecture is tight have been given by Euler \cite{Euler} in the even case and by Wanless and Webb \cite{WanlessWebb2006} in the odd case. If true, Conjecture \ref{chromaticconjecture} is likely difficult to prove, as it implies a pair of long-standing conjectures concerning the existence of large partial transversals in latin squares. These conjectures are attributed to Brualdi and Ryser, respectively. 

\begin{conjecture}[\cite{DenesKeedwellBook, Ryser1967,Stein1975}]\label{RBSconj}
Let $L$ be a latin square of order $n$. Then (1) $L$ possesses a near transversal, and (2) if $n$ is odd then $L$ possesses a transversal.
\end{conjecture}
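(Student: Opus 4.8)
The statement is the Brualdi--Ryser--Stein conjecture, a notorious open problem; what follows is therefore a plan of attack rather than a complete argument, and I will flag where it currently breaks down. Two routes present themselves. The first exploits the implication noted in the text: proving Conjecture~\ref{chromaticconjecture} would yield both parts of Conjecture~\ref{RBSconj} by a one-line averaging argument. A proper coloring of $L$ into $k$ classes partitions its $n^2$ cells into $k$ partial transversals, so the largest class has size at least $\lceil n^2/k\rceil$, a quantity that only decreases as $k$ grows. When $n$ is odd, Conjecture~\ref{chromaticconjecture} provides a coloring with $k\le n+1$, whence some class has size at least $\lceil n^2/(n+1)\rceil = n$, a full transversal; when $n$ is even it provides $k\le n+2$, whence some class has size at least $\lceil n^2/(n+2)\rceil = n-1$, a near transversal. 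Thus the conjecture reduces entirely to the chromatic bound. One attractive feature of this route is that it is already complete in the abelian case: combined with the abelian classification announced in the abstract, where $\chi(L)\in\{|G|,|G|+2\}$, the averaging above produces a full transversal whenever $\chi(L)=|G|$ (equivalently, whenever $L$ has an orthogonal mate, by Proposition~\ref{fundthm}) and a near transversal in the residual even-order cases where $\chi(L)=|G|+2$.

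The second, more direct route attacks the transversal problem through its rainbow-matching reformulation. Identifying the rows and columns of $L$ with the two sides of $K_{n,n}$ and each symbol with a color turns a latin square into a proper edge-coloring of $K_{n,n}$ whose color classes are perfect matchings; a transversal becomes a rainbow perfect matching and a near transversal a rainbow matching of size $n-1$. The plan is to take a maximum rainbow matching $M$ and, assuming $|M|\le n-2$, construct an augmenting structure---an alternating configuration of edges that recolors $M$ into a strictly larger rainbow matching. The modern form of this idea is the absorption method: reserve a small, robustly flexible collection of edges as an absorber, use a semirandom nibble-type process to build a rainbow matching covering all but a handful of vertices, and then feed the uncovered vertices into the absorber to finish.

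I expect the principal obstacle to be the parity phenomenon responsible for the even/odd split in the statement. The Cayley table of a cyclic group of even order has no transversal---its unique involution forces the product of all group elements away from the identity---so any direct augmenting argument must detect and respect this obstruction, guaranteeing a rainbow matching of size $n-1$ without ever over-promising a perfect one. Accommodating this in the absorption framework means designing an absorber that tolerates a single leftover vertex on each side, which is delicate to arrange while preserving the rainbow condition. A second, equally stubborn difficulty is uniformity in $n$: the concentration and absorption estimates are inherently asymptotic and yield the conjecture only for all sufficiently large $n$, so the small and moderate orders would have to be dispatched separately, presumably by finite computation together with a structural analysis of near-extremal squares. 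It is precisely these two points---the parity obstruction and the lack of uniformity---that have kept the full conjecture open.
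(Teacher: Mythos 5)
The statement you were asked to prove is the Brualdi--Ryser--Stein conjecture, which remains open; the paper does not prove it either, but only remarks that Conjecture~\ref{chromaticconjecture} implies it via the counting argument $(n+2)(n-2)=n^2-4<n^2$ (and similarly for the odd case). You correctly recognized this, and your first route---the averaging reduction from the chromatic bound, together with its sound abelian-case corollary via Theorem~\ref{abelianchrom} and Proposition~\ref{fundthm}---is exactly the paper's own argument for that implication, so your proposal matches the paper's treatment as closely as is possible for a statement that neither you nor the paper can actually prove.
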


To see that Conjecture \ref{chromaticconjecture} implies Conjecture \ref{RBSconj}.1, suppose there exists a latin square $L$ in which every partial transversal has size at most $n-2$. Any set of $n+2$ partial transversals in $L$ could cover at most $(n+2)(n-2) = n^2-4$ of $L$'s $n^2$ cells. Thus, $L$ has no proper $(n+2)$-coloring. A similar argument shows that Conjecture \ref{chromaticconjecture} implies Conjecture \ref{RBSconj}.2.

There is a growing body of evidence for Conjecture \ref{chromaticconjecture}. As noted in \cite{Luisetal2016}, the conjecture has been verified for $n \leq 8$. It is also known to hold in an asymptotic sense. Using the canonical representation of latin squares as 3-uniform hypergraphs, the following is an immediate corollary of a powerful theorem due to Pippenger and Spencer \cite{PipSpencer1989}.
\begin{theorem}\label{pipspenc}
As $n \rightarrow \infty$, every latin square $L$ of order $n$ satisfies $\chi(L) = n+o(n)$.
\end{theorem}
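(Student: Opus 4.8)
The plan is to deduce Theorem \ref{pipspenc} from the Pippenger--Spencer theorem by encoding each latin square as a $3$-uniform hypergraph and verifying that the hypotheses of that theorem hold uniformly across all latin squares of order $n$. Recall the standard encoding: to a latin square $L$ of order $n$ associate the hypergraph $H=H(L)$ whose vertex set is the disjoint union of three copies of $[n]$ --- one indexing rows, one indexing columns, one indexing symbols --- and whose edges are the triples $\{r, c, L_{r,c}\}$ (with the three coordinates lying in the three respective copies). This $H$ has exactly $n^2$ edges, and the key structural fact is that each of the $3n$ vertices lies in exactly $n$ edges, while any two distinct vertices lie in at most one common edge. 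Under this encoding, a proper edge coloring of $H$ (i.e. a partition of the edges into matchings) is precisely a partition of the cells of $L$ into partial transversals, so the chromatic index $\chi'(H)$ equals $\chi(L)$.

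Next I would invoke the Pippenger--Spencer theorem on asymptotically good edge colorings of uniform hypergraphs. The version I have in mind states that if a $k$-uniform hypergraph is $D$-regular and has maximum codegree $o(D)$ (every pair of vertices lies in $o(D)$ common edges), then its chromatic index is $(1+o(1))D$ as $D\to\infty$. For $H(L)$ we have $k=3$, the hypergraph is $D$-regular with $D=n$, and the codegree of any pair is at most $1=o(n)$, so the hypotheses are met with the degree parameter tending to infinity exactly as $n\to\infty$. The conclusion gives $\chi'(H(L)) = (1+o(1))n = n + o(n)$. Combining this with the translation $\chi(L)=\chi'(H(L))$ and the trivial lower bound $\chi(L)\ge n$ from Proposition \ref{fundthm} yields $\chi(L)=n+o(n)$.

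The one point demanding genuine care --- and the main obstacle --- is the uniformity of the $o(n)$ term. The statement claims that \emph{every} latin square of order $n$ satisfies $\chi(L)=n+o(n)$, so the error term must be controlled by a single function of $n$ independent of the particular square. This is why it is essential that the Pippenger--Spencer bound depends only on the parameters $k$, $D$, and the maximum codegree, all of which are fixed across the entire family $\{H(L) : L \text{ of order } n\}$: the codegree bound $1$ and the uniformity $k=3$, $D=n$ hold simultaneously for all such $L$. Thus the theorem furnishes one error function $\varepsilon(n)\to 0$ valid for every $L$ of order $n$, which is precisely what the uniform statement requires; no additional structural input about $L$ is needed beyond these parameters.
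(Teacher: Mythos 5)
Your proposal is correct and follows exactly the route the paper indicates: the paper gives no detailed proof, stating only that the result is an immediate corollary of Pippenger--Spencer via the canonical $3$-uniform hypergraph encoding, which is precisely the reduction you carry out (with the welcome extra care about uniformity of the $o(n)$ term over all latin squares of a given order).
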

Furthermore, there are several families for which Conjecture \ref{chromaticconjecture} is known to hold. Cavenagh and Kuhl \cite{CavenaghKuhl2015} showed that Conjecture \ref{chromaticconjecture} holds for circulant latin squares (i.e.\! Cayley tables of cyclic groups) when $n \not\equiv 6 \modulo{12}$. This result was confirmed and extended in \cite{Luisetal2016}, where Conjecture \ref{chromaticconjecture} was established for circulant latin squares of every order.

The present paper continues the work towards resolving Conjecture \ref{chromaticconjecture} in the special case where $L$ is the Cayley table of a finite group. Observe that the chromatic number of $\Gamma := \Gamma(L)$ is not affected by relabelling the rows, columns, or symbol classes of $L$, nor is it affected by applying a fixed permutation to each of the triples $(r,c,s) \in V(\Gamma)$. Thus, $\chi(L)$  is a \emph{main class invariant}, and it makes sense in this context to speak of \emph{the} Cayley table of a group $G$, which we denote by $L(G)$. In a slight abuse of notation, we write $\chi(G)$ for the chromatic number of $L(G)$ and $\Gamma(G)$ for the latin square graph $\Gamma(L(G))$. 

Given a group $G$, let $Syl_2(G)$ denote the isomorphism class of its Sylow 2-subgroups. The groups for which $\chi(G)=n$ were recently characterized by Bray, Evans, and Wilcox \cite{Evans2009,Wilcox2009}, resolving a 50 year old conjecture due to Hall and Paige \cite{HallPaige1955}. 
\begin{theorem}\label{bigequiv}
Let $G$ be a group of order $n$. Then the following are equivalent:
\begin{enumerate}
\item $\chi(G) = n$,
\item $\chi(G) \leq n+1$,
\item $L(G)$ has a transversal,
\item $Syl_2(G)$ is either trivial or non-cyclic.
\end{enumerate}
\end{theorem}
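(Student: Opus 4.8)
The plan is to establish the three ``chromatic'' statements $(1)$, $(2)$, $(3)$ as equivalent by an elementary cycle of implications, and then to derive the equivalence with the group-theoretic condition $(4)$ by appealing to the Hall--Paige theorem. The implication $(1)\Rightarrow(2)$ is immediate. For $(2)\Rightarrow(3)$ I would argue by contraposition via a counting bound: if $L(G)$ has no transversal, then every partial transversal has size at most $n-1$, so any collection of $n+1$ partial transversals covers at most $(n+1)(n-1)=n^2-1$ of the $n^2$ cells of $L(G)$. Hence no $(n+1)$-coloring can exist, which forces $\chi(G)\geq n+2$ and contradicts $(2)$.

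The crucial elementary step is $(3)\Rightarrow(1)$, where the group structure does the real work. I would identify a transversal of $L(G)$ (with $L_{g,h}=gh$) with a permutation $\sigma$ of $G$ for which $g\mapsto g\sigma(g)$ is also a permutation, i.e.\ a complete mapping. Given such a $\sigma$, for each $t\in G$ define the cell set $T_t=\{(g,\sigma(g)t,g\sigma(g)t):g\in G\}$. Right multiplication by $t$ permutes the columns and permutes the symbols, so each $T_t$ is again a transversal; moreover $T_t$ and $T_{t'}$ are disjoint when $t\neq t'$, since agreement in some row $g$ would force $\sigma(g)t=\sigma(g)t'$. The $n$ translates $\{T_t : t\in G\}$ thus partition all $n^2$ cells into transversals, giving an $n$-coloring. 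Since Proposition \ref{fundthm} already gives $\chi(G)\geq n$ with equality precisely when $L(G)$ has an orthogonal mate, this partition yields $\chi(G)=n$ and closes the cycle, establishing $(1)\Leftrightarrow(2)\Leftrightarrow(3)$.

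Finally, $(3)\Leftrightarrow(4)$ is exactly the Hall--Paige theorem recast in the language of transversals: under the identification above, $L(G)$ has a transversal if and only if $G$ admits a complete mapping. The necessity of condition $(4)$---that a complete mapping forces $Syl_2(G)$ to be trivial or non-cyclic---is Hall and Paige's original result \cite{HallPaige1955}, which I would reprove through the standard argument on the product of all group elements in the abelianization $G/G'$. The sufficiency, $(4)\Rightarrow(3)$, is the resolution of the Hall--Paige conjecture completed by Wilcox, Evans, and Bray \cite{Evans2009,Wilcox2009}. This is by far the main obstacle, since its proof depends on the classification of finite simple groups; here I would simply invoke it as a black box rather than attempt to reprove it.
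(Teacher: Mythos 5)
Your proposal is correct and consistent with the paper's treatment: the paper states this theorem without proof, citing it as the Bray--Evans--Wilcox resolution of the Hall--Paige conjecture, and your handling of the deep direction $(4)\Rightarrow(3)$ as a black box matches that. The elementary cycle you supply for $(1)\Leftrightarrow(2)\Leftrightarrow(3)$ --- the counting bound for $(2)\Rightarrow(3)$ and the translation argument $T_t=\{(g,\sigma(g)t,g\sigma(g)t)\}$ partitioning $L(G)$ into $n$ disjoint transversals for $(3)\Rightarrow(1)$ --- is the standard argument and is sound.
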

In light of this, verifying Conjecture \ref{chromaticconjecture} amounts to showing that every group with nontrivial, cyclic Sylow 2-subgroups has an $(n+2)$-coloring. In Section \ref{abelian} we give such a construction under the additional assumption that $G$ is Abelian, yielding our first main result.
\begin{theorem}\label{abelianchrom}
Let $G$ be an Abelian group of order $n$. Then
\begin{equation} \chi(G) = \begin{cases} n &\mbox{if } Syl_2(G) \text{ is either trivial or non-cyclic,}\\ n+2 &\mbox{otherwise}. \end{cases} \label{mainthm1}\end{equation}
\end{theorem}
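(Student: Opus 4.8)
The plan is to prove Theorem~\ref{abelianchrom} by reducing both directions to known results and then supplying an explicit $(n+2)$-coloring in the hard case. First observe that Theorem~\ref{bigequiv} already settles the case where $Syl_2(G)$ is trivial or non-cyclic: there the four equivalent conditions give $\chi(G)=n$ immediately, with no appeal to abelianness needed. So the entire content of the theorem lies in the remaining case, where $G$ is abelian with nontrivial \emph{cyclic} Sylow 2-subgroup. By the same theorem, such a $G$ has \emph{no} transversal, so $L(G)$ cannot be partitioned into $n$ partial transversals, and in fact condition (2) fails as well, giving $\chi(G)\geq n+2$. It therefore remains only to establish the matching upper bound $\chi(G)\leq n+2$; combined with the lower bound this pins the value exactly at $n+2$.

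To build the upper bound I would first reduce to a canonical form for $G$. By the fundamental theorem of finite abelian groups, write $G\cong \mathbb{Z}_{2^a}\times H$ where $2^a$ is the order of the cyclic Sylow 2-subgroup ($a\geq 1$) and $H$ is abelian of odd order. The goal is a coloring of $L(G)$ using exactly $n+2$ partial transversals. The natural strategy is to exhibit a partition of the $n^2$ cells into $n$ near-transversals (partial transversals of size $n-1$) together with two additional small partial transversals that mop up the $2n$ leftover cells, since $n(n-1)+2n=n^2$ forces those two extra classes to have combined size exactly $2n$, i.e.\ each of size $n$ if they are actual transversals of the leftover cells—or more flexibly, any partition into $n+2$ independent sets of $\Gamma(G)$. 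In Cayley-table coordinates, cell $(r,c)$ carries symbol $r+c$, so a partial transversal is a set of pairs $(r,c)$ with distinct $r$'s, distinct $c$'s, and distinct sums $r+c$; the task becomes combinatorial number theory in $G$.

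The concrete construction I would pursue uses \emph{complete mappings} and their near-analogues. A transversal of $L(G)$ corresponds to a complete mapping of $G$, which fails to exist exactly when $Syl_2(G)$ is nontrivial cyclic (Hall--Paige); but one can still cover almost all of $G$ with near-complete mappings. Specifically I would look for a family of $n$ permutations $\sigma_0,\dots,\sigma_{n-1}$ of the index set such that the ``diagonals'' $\{(r,\sigma_i(r))\}$ are pairwise disjoint (partitioning all cells) and each is a near-transversal, then handle the small residual set separately. For the cyclic factor $\mathbb{Z}_{2^a}$ one exploits the classical fact that $\mathbb{Z}_{2^a}$ admits a sequencing-type structure covering all but a controlled set of sums, and one uses the odd-order factor $H$ (which \emph{does} have complete mappings, being of odd order) to ``thicken'' this into a coloring of the full direct product. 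The key technical device is a product construction: given good colorings or near-coverings of $L(\mathbb{Z}_{2^a})$ and $L(H)$, combine them via the direct-product structure of the Cayley table to obtain the desired $(n+2)$-coloring of $L(G)$.

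The main obstacle I anticipate is the interaction at the prime $2$: the two ``defective'' color classes must precisely account for the obstruction to a transversal, and controlling the multiset of row-sums so that the leftover $2n$ cells split into exactly two independent sets (rather than three or more) is delicate. Concretely, one must ensure that after removing $n$ disjoint near-transversals, the remaining cells—whose positions are dictated by where each near-transversal ``misses''—form a subgraph of $\Gamma(G)$ that is $2$-colorable, i.e.\ has no constraint forcing three colors. I expect this to hinge on a careful, possibly case-split, explicit construction for the $2$-power part $\mathbb{Z}_{2^a}$ (likely treating $a=1$ separately from $a\geq 2$), after which the odd part and the product construction should go through more routinely. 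Verifying that the explicit diagonals really are pairwise disjoint and each sum-distinct is the bookkeeping-heavy step, but the genuine difficulty is arranging the residual cells into just two classes.
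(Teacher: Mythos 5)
Your reduction is correct and matches the paper's: Theorem~\ref{bigequiv} disposes of the case where $Syl_2(G)$ is trivial or non-cyclic and gives the lower bound $\chi(G)\geq n+2$ in the remaining case, so everything rests on proving $\chi(G)\leq n+2$ when $G$ is Abelian with nontrivial cyclic Sylow 2-subgroup. But that upper bound is the entire content of the theorem, and your proposal does not prove it --- it describes a research plan (``I would look for a family of $n$ permutations\dots'', ``I expect this to hinge on a careful, possibly case-split, explicit construction\dots'') and explicitly defers the two genuinely hard steps: producing the $n$ pairwise-disjoint near-transversals, and showing the residual cells split into exactly two independent sets. Those are precisely the steps that occupy all of Section~\ref{abelian} of the paper, so as written there is a genuine gap rather than a proof. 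There is also an arithmetic slip in your cell count: $n$ near-transversals cover $n(n-1)=n^2-n$ cells, leaving $n$ residual cells, not $2n$ (note $n(n-1)+2n=n^2+n\neq n^2$); the two extra color classes must together have size $n$, not $2n$.

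For comparison, the paper's construction (Theorem~\ref{abelianmainresult}) does realize the shape you describe, but by a specific mechanism you would still need to discover. Writing $G=\mathbb{Z}_t\times H$ with $t=2^l$ and $|H|=m$ odd, it orders $G$ by $g_i=(i\bmod t,\,h_{i\bmod m})$ (a CRT-style interleaving, not the lexicographic product order), pairs consecutive broken diagonals $D_i=T_{2i}\cup T_{2i+1}$, and shows each $\Gamma[D_i]$ is an induced M\"obius ladder $M_n$: the row/column edges form the rim and the symbol edges form the rungs, the latter verified via an injectivity lemma (Lemma~\ref{techlemma}) about maps $g_i\mapsto g_{i+c}+g_{si+d}$. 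It then exhibits two explicit independent sets $X$ and $Y$, of total size $n$, which remove a near-antipodal pair from each ladder; by Proposition~\ref{greedymob} each punctured ladder is bipartite, giving $\tfrac{n}{2}$ bipartite pieces plus $X\cup Y$ and hence $n+2$ colors. The delicate bookkeeping you anticipate (distinctness of rows, columns, and symbols in the residual classes) is exactly where the paper again invokes Lemma~\ref{techlemma} together with a parity argument in the $\mathbb{Z}_t$ coordinate. Without an explicit analogue of this machinery, your outline cannot be checked for correctness.
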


In Section \ref{generalbd} we turn to the case of general (i.e.\! not necessarily Abelian) groups. We begin by showing that $\chi(G)$ is submultiplicative, thereby generalizing a classical result due to Hall and Paige.
This allows us to establish an upper bound for $\chi(G)$ which depends only upon the largest power of 2 dividing $|G|$: every group $G$ of order $n = 2^lm \geq 3$ satisfies 
\begin{equation}
\chi(G) \leq n+\frac{n}{2^{l-1}}.
\label{twoss}\end{equation}
 Previously, the best known general upper bound for $\chi(G)$ was due to Wanless; it follows directly from his work in \cite{Wanless2002} that every finite group satisfies $\chi(G) \leq 2n$. This bound is improved upon by \eqref{twoss} except when $n \equiv 2 \modulo{4}$. Dealing directly with this final case, we obtain our second main result.
\begin{theorem}\label{brooksbound}
Let $G$ be a group of order $n \geq 3$. Then
\[ \chi(G) \leq \frac{3}{2}n.\]
\end{theorem}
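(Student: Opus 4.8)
The plan is to first reduce to the single residue class $n\equiv 2\pmod 4$, and then to produce an explicit $\tfrac32 n$-coloring by partitioning $L(G)$ into $m$ cubic graphs and $3$-coloring each via Brooks' theorem.

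\emph{Reduction.} Write $n=2^{\ell}m$ with $m$ odd. If $\ell=0$ then $Syl_2(G)$ is trivial, so Theorem \ref{bigequiv} gives $\chi(G)=n\le\tfrac32 n$. If $\ell\ge 2$ then $n/2^{\ell-1}\le n/2$, so \eqref{twoss} gives $\chi(G)\le n+n/2^{\ell-1}\le\tfrac32 n$. Thus only $\ell=1$, i.e.\ $n=2m$ with $m$ odd, remains. Here a classical argument (in the regular representation an involution acts as a product of $m$ transpositions, an odd permutation, so $G$ admits a sign homomorphism onto $\mathbb{Z}_2$) produces a normal subgroup $H\trianglelefteq G$ of index $2$ and odd order $m$; since $\gcd(m,2)=1$, Schur--Zassenhaus splits the extension, so $G=H\rtimes\langle t\rangle$ with $t^2=e$. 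As $Syl_2(H)$ is trivial, Theorem \ref{bigequiv} gives $\chi(H)=m$, so $L(H)$ decomposes into $m$ transversals. The target becomes $\chi(G)\le 3m=n+\chi(H)$.

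\emph{Block structure and generalized diagonals.} Splitting rows, columns, and symbols along the cosets $H$ and $tH$ exhibits $L(G)$ as a $2\times2$ array of $m\times m$ blocks, each isotopic to $L(H)$: the two diagonal blocks carry the symbols of $H$, and the two off-diagonal blocks carry the symbols of $tH$. I would choose a transversal decomposition of each of the four blocks and, for each $a\in H$, let $D_a$ be the union of the $a$-th transversals of the four blocks, so that $\{D_a\}_{a\in H}$ partitions the $4m^2$ cells into $m$ sets of size $4m$. Now consider the induced subgraph $\mathcal{G}_a:=\Gamma(G)[D_a]$, where two cells are adjacent when they share a row, a column, or a symbol. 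A short count shows that every row, every column, and every symbol meets $D_a$ in exactly two cells (each diagonal block and each off-diagonal block contributing one), so $\mathcal{G}_a$ is cubic (subcubic if some cell's row-, column-, and symbol-partners coincide). Since independent sets in $\Gamma(G)$ are exactly partial transversals of $L(G)$, any proper $3$-coloring of $\mathcal{G}_a$ splits $D_a$ into three partial transversals of $L(G)$; spending three fresh colors on each of the $m$ graphs $\mathcal{G}_a$ then yields a proper $3m$-coloring of $L(G)$.

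\emph{Brooks and the main obstacle.} By Brooks' theorem each cubic $\mathcal{G}_a$ is $3$-colorable unless one of its components is $K_4$, so the whole argument reduces to choosing the four block decompositions so that no $\mathcal{G}_a$ has a $K_4$ component. This is the crux, and I expect it to be the only real difficulty. The naive ``aligned'' choice, using the same decomposition in all four blocks, demonstrably fails: at every fixed point of the conjugation automorphism induced by $t$ (in particular at the identity) the four cells of the corresponding $2\times2$ sub-box form a copy of $L(\mathbb{Z}_2)$, whose induced subgraph is exactly $K_4$. The task is therefore to perturb the decompositions of the opposite blocks so that the symbol-adjacencies no longer close these four-cell boxes into cliques but instead link them into larger cubic components. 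The oddness of $m$ is the essential resource here---$H$ has unique square roots and a plentiful supply of transversal decompositions related by fixed-point-free shifts---and I anticipate that the bulk of the work lies in making a single choice that eliminates all $K_4$ components uniformly in $a$, especially when $H$ is nonabelian.
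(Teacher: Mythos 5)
Your architecture is exactly that of the paper: reduce to $n=2m$ with $m$ odd via Theorem \ref{bigequiv} and the bound \eqref{twoss}, pass to the index-$2$ normal subgroup $H$ of odd order, split $L(G)$ into four $m\times m$ blocks each decomposable into $m$ transversals, bundle one transversal from each block into a set meeting every row, column, and symbol class exactly twice, and $3$-color the resulting cubic induced subgraphs by Brooks' theorem. You have also correctly identified the crux --- that the four decompositions must be chosen so that no bundle induces a $K_4$ component, and that the naively aligned choice fails. But you stop there: you only ``anticipate'' that a suitable perturbation exists, and an existence claim for the key combinatorial choice is precisely the content of the theorem in this residue class. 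As written, the proof is incomplete.

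The missing idea is small but essential, and it is worth recording because it is simpler than the ``fixed-point-free shift on $H$'' machinery you gesture at. The paper derives all four block decompositions from a single transversal decomposition $\{T_i\}$ of $A_0=L(H)$ with index maps $\phi_i$ (using the permutation $\pi$ with $h_jp=ph_{\pi(j)}$ to handle the two blocks whose symbols lie in $pH$), and then defines the bundles with a cyclic shift in exactly one of the four blocks: $X_i:=T_i\cup T'_{i+1}\cup Q_i\cup Q'_i$, indices modulo $m$. A hypothetical $K_4$ component of $\Gamma[X_i]$ must contain one vertex per block with its row- and column-edges forming a $4$-cycle; chasing the row and column indices around that cycle forces $\phi_i(j)=\phi_{i+1}(j)$ for some $j$, contradicting $T_i\cap T_{i+1}=\emptyset$. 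Note that no appeal to symbol-edges, to unique square roots in $H$, or to Schur--Zassenhaus is needed --- the disjointness of consecutive transversals alone kills every potential $K_4$ uniformly in $i$. If you supply this shift (or an equivalent one) and the accompanying index chase, your argument closes and coincides with the paper's proof.
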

The condition $n \geq 3$ is necessary because $\Gamma(\mathbb{Z}_2) \cong K_4$. It is worth noting that
Theorem \ref{brooksbound} is still somewhat far from the best possible. Indeed, if Conjecture \ref{chromaticconjecture} is true, then the bound \eqref{mainthm1} holds for every finite group.

\section{The chromatic number of Abelian groups\label{abelian}}

Let $G=\mathbb{Z}_{n_1} \times \mathbb{Z}_{n_2} \times\cdots  \times  \mathbb{Z}_{n_k}$ be a finite Abelian group of order $n$. We say that $G = \{g_0,g_1,\ldots,g_{n-1}\}$ is ordered \defn{lexicographically} if $g_i = (i_1,i_2,\ldots,i_k)$ precedes $g_j = (j_1,j_2,\ldots,j_k)$ (i.e.\! $i<j$) if and only if there is some $l\in \{1,2,\ldots,k\}$ for which $i_l < j_l$ and $i_m = j_m$ for every positive integer $m < l$. In the statement of the following technical lemma, indices are expressed modulo $n$.

\begin{lemma}
Let $G=\mathbb{Z}_{n_1} \times \mathbb{Z}_{n_2} \times\cdots  \times  \mathbb{Z}_{n_k}=\{g_0,g_1,\ldots,g_{n-1}\}$ be a lexicographically ordered Abelian group of odd order $n$. If $\gcd(s+1,n) = 1$ for some positive integer $s$, then the map $\phi: G \rightarrow G$ given by $\phi(g_i) = g_{i+c}+g_{si+d}$ is injective for every $c,d\in[n]$.
\label{techlemma}\end{lemma}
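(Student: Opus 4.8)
The plan is to prove injectivity by induction on the number $k$ of cyclic factors, exploiting the way the lexicographic order interacts with projection onto the least significant coordinates. Throughout I identify $G$ with $[n]$ via $g_i \leftrightarrow i$ and recall that, writing $M = n_2 n_3\cdots n_k = n/n_1$, the element $g_m$ has most significant coordinate $\lfloor m/M\rfloor \in [n_1]$, while its remaining $k-1$ coordinates record the $(m \bmod M)$-th element of $G' := \mathbb{Z}_{n_2}\times\cdots\times\mathbb{Z}_{n_k}$ in lexicographic order. Since $\phi$ is a self-map of a finite set, injectivity is equivalent to the implication $\phi(g_i)=\phi(g_j)\Rightarrow i=j$, which is what I will establish.

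For the base case $k=1$ the group is $\mathbb{Z}_n$, so $g_i = i$ and $\phi(g_i) = (i+c)+(si+d) = (s+1)i + (c+d)$ in $\mathbb{Z}_n$; this affine map is injective precisely because $\gcd(s+1,n)=1$. For the inductive step, let $\pi\colon G \to G'$ be the homomorphism that forgets the most significant coordinate. The crucial observation is that $\pi$ is simultaneously compatible with the group operation and with the cyclic shifts defining $\phi$: because $M$ divides $n$, one has $\pi(g_{i+c}) = g'_{(i+c) \bmod M}$ and $\pi(g_{si+d}) = g'_{(si+d)\bmod M}$, where $g'$ denotes the lexicographically ordered elements of $G'$. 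Consequently $\pi(\phi(g_i))$ equals the analogous map $\phi'$ for $G'$---built from the reduced constants $c \bmod M$, $d\bmod M$ and the same $s$---evaluated at $g'_{i\bmod M}$. Since $G'$ is again a lexicographically ordered Abelian group whose odd order $M$ divides $n$, we have $\gcd(s+1,M)=1$ and the induction hypothesis applies, so $\phi'$ is injective. Applying $\pi$ to $\phi(g_i)=\phi(g_j)$ therefore yields $i\equiv j \pmod{M}$.

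It remains to promote $i\equiv j\pmod M$ to $i\equiv j\pmod n$ using the most significant coordinate, and this is the step I expect to be the only real obstacle, since here the reduction modulo $n$ in the definition of $\phi$ produces a ``carry'' that does not interact with the group law as cleanly as the lower coordinates do. The key is that once $i \equiv j \pmod M$, the low-order parts of the indices $(i+c)\bmod n$ and $(j+c)\bmod n$ coincide (and likewise for $(si+d)$ and $(sj+d)$), so the difference of most significant coordinates $\lfloor((i+c)\bmod n)/M\rfloor - \lfloor((j+c)\bmod n)/M\rfloor$ reduces, modulo $n_1$, to the integer $\tau := (i-j)/M$, while the $(si+d)$ terms contribute $s\tau$. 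Feeding these into the most-significant-coordinate equation extracted from $\phi(g_i)=\phi(g_j)$ gives $(s+1)\tau\equiv 0 \pmod{n_1}$; since $\gcd(s+1,n_1)=1$ this forces $\tau\equiv 0\pmod{n_1}$, and as $\tau$ lies strictly between $-n_1$ and $n_1$ we conclude $\tau=0$, i.e.\ $i=j$. The bookkeeping in this last paragraph---verifying that the carries align so that the top coordinate behaves affinely---is where care is needed, but the divisibility $M\mid n$ together with the congruence $i\equiv j\pmod M$ makes it go through.
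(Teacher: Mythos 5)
Your proposal is correct and follows essentially the same route as the paper: induct on the number of cyclic factors, use the projection onto $H=\mathbb{Z}_{n_2}\times\cdots\times\mathbb{Z}_{n_k}$ together with the induction hypothesis to get $i\equiv j\pmod{n/n_1}$, and then read off $(s+1)\tau\equiv 0\pmod{n_1}$ from the most significant coordinate (the paper writes $j=i+rm$ and computes the floor functions explicitly, which is exactly the carry bookkeeping you flag). The only cosmetic difference is that the paper's base case is the whole cyclic case $G=\mathbb{Z}_n$ rather than $k=1$, which changes nothing.
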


\begin{proof}

Suppose $G = \mathbb{Z}_n$ is cyclic and consider $g,h \in G$ such that $\phi(g) = \phi(h)$. We may treat $g$ and $h$ as integers in the set $[n]$, in which case the group operation is simply addition modulo $n$, and
 \[ (s+1)g+c+d \equiv \phi(g) \equiv \phi(h) \equiv (s+1)h +c+d\modulo{n}.\]
We then have $(s+1)g \equiv (s+1)h \modulo{n}$. But $\gcd(s+1,n)=1$ tells us that $s+1$ is a generator of $G = \mathbb{Z}_n$, and therefore $g \equiv h \modulo{n}$, as desired. 

Now, we may assume $G = \mathbb{Z}_{n_1} \times H$, where $H=\mathbb{Z}_{n_2} \times\cdots  \times  \mathbb{Z}_{n_k}$ is a nontrivial Abelian group of odd order $m := n/n_1$. If $H = \{h_0,h_1,\ldots,h_{m-1}\}$ is ordered lexicographically, then for every $i \in [n]$
\begin{equation*}
g_i = \left(\left\lfloor\frac{i}{m}\right\rfloor, h_{i \modulo{m}} \right)
\label{rootbijorder}\end{equation*}
and, defining the map $\psi: H \rightarrow H$ by $\psi(h_i) = h_{i+c\modulo{m}}+ h_{si+d\modulo{m}}$, we have
\begin{equation*} \phi(g_i) = \left(\left\lfloor \frac{i+c}{m} \right\rfloor+\left\lfloor \frac{si+d}{m} \right\rfloor \modulo{n_1}\, ,\; \psi\left(h_{i \modulo{m}}\right)\right).\label{inductphi}\end{equation*}
Consider $i,j \in [n]$ such that $\phi(g_i) = \phi(g_j)$. In this case we have $\psi\left(h_{i \modulo{m}}\right) = \psi\left(h_{j \modulo{m}}\right)$. It then follows by induction on $|G|$ that $i \equiv j \modulo{m}$. Thus, there is some $r \in [n_1]$ such that $j = i+rm$, and 
\begin{equation*}
 \left\lfloor \frac{i+c}{m} \right\rfloor+\left\lfloor \frac{si+d}{m} \right\rfloor \equiv \left\lfloor \frac{i+rm+c}{m} \right\rfloor+\left\lfloor \frac{si+srm+d}{m} \right\rfloor\equiv \left\lfloor \frac{i+c}{q} \right\rfloor+r+\left\lfloor \frac{si+d}{q} \right\rfloor +sr\modulo{n_1}.
\end{equation*}
We then have $(s+1)r \equiv 0 \modulo{n_1}$. But $r \in [n_1]$ and $\gcd(s+1,n_1) = 1$, forcing us to conclude that $r=0$, in which case $i=j$. 
\end{proof}

The \defn{M\"{o}bius ladder} of order $2n$, denoted $M_n$, is the cubic graph formed from a cycle of length $2n$ by adding $n$ edges, one between each pair of vertices at distance $n$ in the initial cycle. We refer to this initial cycle as the \emph{rim} of $M_n$, and refer to the edges between opposite vertices in the rim as \emph{rungs}. A pair $\{u,v\} \subseteq V(M_n)$ is called \defn{near-antipodal} if the shortest path from $u$ to $v$ along the rim of $M_n$ has length $n-1$ (see Figure \ref{mobladz}). There is a strong sense in which M\"{o}bius ladders are ``nearly" bipartite.

\begin{proposition}
For $n \geq 3$, let $M_n=(V,E)$ be the M\"{o}bius ladder of order $2n$, and let $\{u,v\} \in V$ be a near-antipodal pair. Then the induced subgraph $M_n\left[V\setminus\{u,v\}\right]$ is bipartite.
\label{greedymob}\end{proposition}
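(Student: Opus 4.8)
The plan is to use the symmetry of $M_n$ to reduce to one canonical pair and then exhibit an explicit proper $2$-colouring of the vertices that survive. Label the rim vertices $0,1,\dots,2n-1$ in cyclic order, so the rim edges are the pairs $\{i,i+1\}$ and the rungs are the pairs $\{i,i+n\}$ (indices modulo $2n$). Viewing $M_n$ as the circulant graph on $\mathbb{Z}_{2n}$ with connection set $\{\pm1,n\}$, each rotation $i\mapsto i+1$ is an automorphism, as is the reflection $i\mapsto -i$; together these act transitively on the set of near-antipodal pairs. Hence it suffices to treat the single pair $\{u,v\}=\{0,\,n-1\}$. (For odd $n$ the parity colouring already shows $M_n$ is bipartite, so the real content is the even case, but the colouring below works uniformly.)

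First I would write down the surviving subgraph $H:=M_n[V\setminus\{0,n-1\}]$ explicitly. Deleting $0$ and $n-1$ splits the rim into two arcs, $A=1,2,\dots,n-2$ and $B=n,n+1,\dots,2n-1$, and destroys exactly the two rungs $\{0,n\}$ and $\{n-1,2n-1\}$. A short degree check shows that the endpoints $n$ and $2n-1$ of $B$ become pendant (degree-one) vertices, while the remaining rungs $\{i,i+n\}$ for $i=1,\dots,n-2$ join the path $A$ to the inner subpath $n+1,\dots,2n-2$ of $B$ with consecutive rungs landing on consecutive vertices of each rail. In other words, $H$ is a ladder graph (isomorphic to $P_{n-2}\times K_2$) together with one pendant vertex hung at each end of one rail. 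This alignment of the rungs into a clean ladder is exactly what the near-antipodal placement provides.

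I would then produce the bipartition directly. Ladders are bipartite and pendant vertices can never obstruct bipartiteness, but to stay self-contained and sidestep any casework on the parity of $n$, I would simply set $\chi(w)=w\bmod 2$ for $w\in A$ and $\chi(w)=(w+n+1)\bmod 2$ for $w\in B$. Verifying that every rim edge inside an arc, every surviving rung, and both pendant edges receive two colours is a routine residue computation; the shift by $n+1$ on $B$ is exactly the correction that makes the rungs bichromatic regardless of the parity of $n$. The one genuinely load-bearing step is the structural bookkeeping of the previous paragraph---confirming which edges survive and that the only edges between $A$ and $B$ are the listed rungs---rather than any subtlety in the colouring itself; this is the modest obstacle I would expect.
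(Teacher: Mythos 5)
Your proof is correct and takes essentially the same approach as the paper, which simply observes that coloring the surviving vertices in rim order (greedily, i.e.\ by parity along the two arcs) uses only two colors. Your write-up supplies the structural bookkeeping and the explicit rung check that the paper leaves as an observation, but the underlying idea---a parity $2$-coloring along the rim, with the near-antipodal deletion aligning the rungs so they come out bichromatic---is the same.
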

\begin{proof}
Observe that the greedily coloring of $M_n\left[V\setminus\{u,v\}\right]$ with vertices ordered clockwise around the rim of $M_n$ uses exactly 2 colors. 
\end{proof}

\begin{figure}[h]
\begin{center}
\begin{tikzpicture}[scale=0.7, every node/.style={scale=0.7}]
\def \n {36}
\def \radius {3cm}
\def \margin {0} 

\foreach \s in {1,...,\n}
{
  \node[fill=black, circle,inner sep=0pt,minimum size=2mm] at ({360/\n * (\s - 1)}:\radius) (\s){};
  \draw ({360/\n * (\s - 1)+\margin}:\radius) 
    arc ({360/\n * (\s - 1)+\margin}:{360/\n * (\s)-\margin}:\radius);
}

\draw (1) -- (19);
\draw (2) -- (20);
\draw (3) -- (21);
\draw (4) -- (22);
\draw (5) -- (23);
\draw (6) -- (24);
\draw (7) -- (25);
\draw (8) -- (26);
\draw (9) -- (27);
\draw (10)--(28);
\draw (11)--(29);
\draw (12)--(30);
\draw (13)--(31);
\draw (14)--(32);
\draw (15)--(33);
\draw (16)--(34);
\draw (17)--(35);
\draw (18)--(36);

=

  \node[fill=red, circle,inner sep=0pt,minimum size=4mm] at ({360/\n * (1 - 1)}:\radius) {$v$};
  \node[fill=red, circle,inner sep=0pt,minimum size=4mm] at ({360/\n * (20 - 1)}:\radius) {$u$};

\end{tikzpicture}
\hspace{36pt}
\begin{tikzpicture}[scale=0.7, every node/.style={scale=0.7}]
\def \n {18}
\def \radius {2.2cm}
\def \margin {0} 

\foreach \s in {1,...,\n}
{
  \node[fill=black, circle,inner sep=0pt,minimum size=2mm] at ({360/\n * (\s - 1)}:\radius) (a\s){};
 }

\def \radius {3cm}
\foreach \s in {1,...,\n}
{
  \node[fill=black, circle,inner sep=0pt,minimum size=2mm] at ({360/\n * (\s - 1)}:\radius) (b\s){};
}

\foreach \s in {1,...,17}  
{
\foreach \s [evaluate=\s as \r using int(\s+1)] in {1,...,17}
{
\draw (b\s)--(b\r);
\draw (a\s)--(a\r);
\draw (a\s)--(b\s);
\draw (a\r)--(b\r);
  }
}

\draw (a18)--(b1);
\draw (b18)--(a1);

  \node[fill=red, circle,inner sep=0pt,minimum size=4mm] at ({360/\n * (5 - 1)}:\radius) {$v$};
    \node[fill=red, circle,inner sep=0pt,minimum size=4mm] at ({360/\n * (6 - 1)}:2.2cm) {$u$};

\end{tikzpicture}
\caption{Two drawings of the M\"{o}bius ladder $M_{18}$ with a near-antipodal pair of vertices highlighted.}\label{mobladz}
\end{center}
\end{figure}
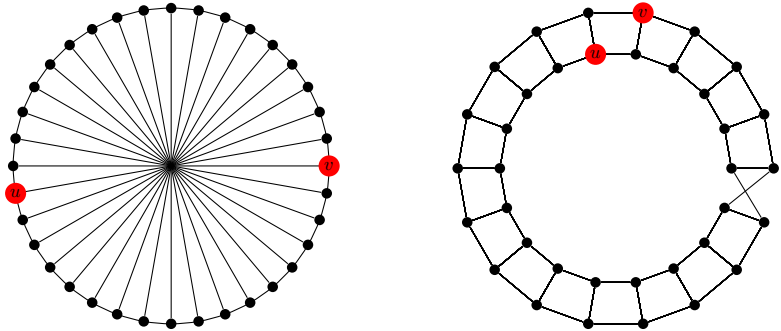

\vspace{-12pt}

Let $G=\{g_0,g_1,\ldots,g_{n-1}\}$ be a group satisfying $\chi(G) > n$. We construct an $(n+2)$-coloring of $\Gamma = \Gamma(G)$ in two steps. First, we show that $\Gamma$ can be partitioned into $\frac{n}{2}$ induced copies of $M_n$. Second, we find a bipartite induced subgraph $\Lambda \subseteq \Gamma$ which contains a near-antipodal pair from each copy of $M_n$. By Proposition \ref{greedymob}, we then have a partition of $\Gamma$ into $\frac{n}{2}+1$ bipartite induced subgraphs. Using a disjoint pair of colors for each of these subgraphs, we obtain an $(n+2)$-coloring of $\Gamma$. 

Before formally presenting our construction, we introduce some notation which will be utilized both here and in Section \ref{generalbd}. Letting $L = L(G)$ and fixing an integer $d \in [n]$, we define the \defn{\emph{d}th right diagonal of $L$} as the set
\begin{equation*}\label{dthdiag}  T^L_d := \{ L_{i,i+d} \,:\, i \in [n]\},\end{equation*}
where indices are expressed modulo $n$. When it is clear which latin square we are discussing, we drop the superscript and simply write $T_d$. We also define the maps $R,C: L \rightarrow [n]$ and $S: L \rightarrow G$ by 
\begin{equation}R(L_{ij}) = i, C(L_{ij})=j,\text{ and }S(L_{ij}) = g_ig_j.\label{projmaps}\end{equation}
These maps send a cell of $L$ to its row index, its column index, and its symbol, respectively. We then extend these functions to sets of cells. For $A \subseteq L$, let $R(A) = \{R(a) \,:\, a \in A\}$ be the multiset containing the row-index of every cell in $A$ (counted with multiplicity), and define $C(A)$ and $S(A)$ similarly.

\begin{theorem}
\label{abelianmainresult}
Let $G$ be an Abelian group of order $n$. If $Syl_2(G)$ is cyclic and nontrivial then
\[\chi(G) \leq n+2.  \]
\end{theorem}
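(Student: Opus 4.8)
The plan is to carry out the two-step strategy sketched just above the statement: first decompose $\Gamma = \Gamma(G)$ into $\tfrac{n}{2}$ vertex-disjoint induced M\"obius ladders $M_n$, then extract a single bipartite induced subgraph meeting each ladder in a near-antipodal pair. Since $Syl_2(G)$ is cyclic and nontrivial, $n$ is even, and I would begin by fixing the lexicographic ordering $G = \{g_0,\dots,g_{n-1}\}$ and recalling that the $n$ right diagonals $T_0,\dots,T_{n-1}$ partition the cells of $L(G)$, each meeting every row and column exactly once (so $R(T_d)=C(T_d)=[n]$).

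For the first step I would pair the diagonals into $\tfrac{n}{2}$ pairs $\{T_d,T_{d'}\}$ with offset difference $d'-d$ coprime to $n$ (for instance $\{T_{2k},T_{2k+1}\}$), and claim each pair induces a copy of $M_n$. I would verify this by separating the three adjacency types in $\Gamma$. The row- and column-incidences between $T_d$ and $T_{d'}$ each give every cell exactly one neighbour in the opposite diagonal, so together they form a $2$-regular bipartite graph; since both operations are purely on indices mod $n$, tracing the alternation of row- and column-steps shows its length is $2n/\gcd(d'-d,n)$, and coprimality makes it a single $2n$-cycle, the rim. The symbol-incidences should contribute only the rungs: with $n$ even the symbol map $i \mapsto g_i+g_{i+d}$ on a diagonal is $2$-to-$1$, producing a perfect matching that I would show joins rim-antipodal vertices, while no symbol edge runs between the two diagonals. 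Proving that these are exactly the $n$ rungs and that no extra edges occur is the technical core of Part 1, and is where the group structure (not just the index arithmetic) first enters.

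For the second step I would select, from the ladder on $\{T_d,T_{d'}\}$, a near-antipodal pair consisting of one cell of $T_d$ and one of $T_{d'}$ — the two cells lie in opposite diagonals because $n-1$ is odd — and do so simultaneously across all $\tfrac{n}{2}$ ladders so that the resulting $n$-cell set $\Lambda$ (one cell per diagonal) is bipartite. The cleanest route is to arrange $\Lambda$ as a union $A \sqcup B$ of two partial transversals, which is automatically bipartite; then $A$ and $B$ each require pairwise distinct rows, columns, and symbols. Choosing the selected cells along a linear pattern of rows reduces row- and column-distinctness to coprimality conditions, and reduces symbol-distinctness to the injectivity of a map of the form $\phi(g_i)=g_{i+c}+g_{si+d}$. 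This is precisely what Lemma \ref{techlemma} supplies on the odd-order factor of $G$ when $\gcd(s+1,\,\cdot\,)=1$, and it is here that the lexicographic bookkeeping — recalling that the index shift $i\mapsto i+d$ is \emph{not} group addition — becomes delicate.

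Granting $\Lambda$, the conclusion is immediate: by Proposition \ref{greedymob}, deleting its near-antipodal pair leaves each $M_n$ a bipartite induced subgraph, so $V(\Gamma)$ partitions into the $\tfrac{n}{2}$ bipartite graphs $M_n\setminus\Lambda$ together with the bipartite graph $\Lambda$. Two-colouring each of these $\tfrac{n}{2}+1$ induced bipartite subgraphs with a disjoint pair of colours yields a proper colouring of $\Gamma$ with $2(\tfrac{n}{2}+1)=n+2$ colours (edges between pieces are harmless, since distinct pieces use disjoint palettes), giving $\chi(G)\le n+2$. I expect the main obstacle to be the global construction of $\Lambda$: Part 1 follows from fairly rigid incidence counting, whereas choosing a near-antipodal pair from every ladder at once so that the union splits into two partial transversals is a genuinely global constraint whose feasibility rests on the distinctness of symbol classes guaranteed by Lemma \ref{techlemma}.
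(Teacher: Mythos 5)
Your outline is the paper's own strategy---pair consecutive right diagonals $T_{2i}\cup T_{2i+1}$ into $\tfrac{n}{2}$ induced M\"obius ladders, hit each ladder in a near-antipodal pair by a union of two partial transversals, and finish with Proposition \ref{greedymob}---but there is a concrete error at the very first choice you make: you order $G$ lexicographically, and under that ordering the diagonal pairs do \emph{not} induce M\"obius ladders. Take $G=\mathbb{Z}_2\times\mathbb{Z}_3$ ordered lexicographically, $g_0=(0,0),\dots,g_5=(1,2)$, and look at $T_0\cup T_1$. The symbol $(0,0)$ occurs at the cells $L_{0,0}$, $L_{3,3}$, $L_{1,2}$, and $L_{4,5}$, i.e.\ at two cells of $T_0$ \emph{and} two cells of $T_1$; these four cells span a $K_4$ of symbol-edges, so the induced graph is not cubic and your claim that ``no symbol edge runs between the two diagonals'' fails. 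The fix is exactly the point of the ordering \eqref{mainresorder} in the paper: writing $G=\mathbb{Z}_t\times H$ with $t=|Syl_2(G)|$ and setting $g_i=(i\bmod t,\,h_{i\bmod m})$ (a CRT-style interleaving, \emph{not} the lexicographic order) makes the $\mathbb{Z}_t$-coordinate of every symbol on $T_{2i}$ equal to $\overline{2j+2i}$ and on $T_{2i+1}$ equal to $\overline{2j+2i+1}$; since $t$ is even these have opposite parities, which is what kills all cross-diagonal symbol edges. Your within-diagonal ``2-to-1'' count and the rim analysis are fine, but they are not where the group structure bites.

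Two further points on Part 2, which you correctly identify as the global constraint but leave as a feasibility claim. First, the paper does not merely assert that a suitable $\Lambda=A\sqcup B$ exists; it constructs it explicitly ($x_i=L_{i,3i}$, $x_i'=L_{q_0+i,\,q_1+3i}$ for $i\in[\lceil n/4\rceil]$, and a shifted copy for $Y$), and verifying column-distinctness forces a case split on $q\bmod 3$ in the choice of $(q_0,q_1)$ that your ``coprimality conditions'' gloss over. Second, Lemma \ref{techlemma} only controls the odd-order factor $H$; symbol-distinctness of $\widehat{X}$ also requires ruling out collisions in the $\mathbb{Z}_t$-coordinate, where $4i\equiv 4j\pmod{t}$ is a weaker condition than $i\equiv j$, and the paper needs a separate counting/Chinese-Remainder argument (split into the cases $t\le 4$ and $t\ge 8$) to close this. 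So the skeleton is right, but the ordering must be changed before Part 1 is true, and Part 2 needs an actual construction rather than an appeal to Lemma \ref{techlemma} alone.
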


\begin{proof}

Because $Syl_2(G)$ is nontrivial, $n$ is even and the constant
\[ q:=n/2\]
is well-defined. We may assume $n \geq 4$, as $\Gamma(\mathbb{Z}_2) \cong K_4$ has chromatic number $4 = 2+2$. Moreover, letting $t:=|Syl_2(G)|$, there is some integer $l \geq 1$ such that $t = 2^l$. By the fundamental theorem of finite Abelian groups $G = \mathbb{Z}_t \times \mathbb{Z}_{m_1} \times \mathbb{Z}_{m_2} \times\cdots  \times  \mathbb{Z}_{m_k}$, where $m:= \prod_{i=1}^k m_k$ is odd. Letting $H :=   \mathbb{Z}_{m_1} \times \mathbb{Z}_{m_2} \times\cdots  \times  \mathbb{Z}_{m_k}$, we order $H = \{h_0,h_1,\ldots,h_{m-1}\}$ lexicographically. We then impose an ordering on $G$ by setting 
\begin{equation} g_i := \left( i \modulo{t}, h_{i \modulo{m}}\right) \text{ for every }i \in [n].\label{mainresorder} \end{equation}
 Arrange the rows and columns of $L = L(G)$ according to this ordering and, for every $i \in [q]$, define the set
 \[ D_i := T_{2i} \cup T_{2i+1}.\] 
 
Letting $\Gamma := \Gamma(G)$, recall that $V = V(\Gamma)$ corresponds to the set of cells in $L$, and that two cells are adjacent if they lie in the same row, they lie in the same column, or they contain the same symbol. By definition, two cells in a latin square can satisfy at most one of these conditions. We therefore have a natural partition of $E = E(\Gamma)$ into the sets $E_R$, $E_C$, and $E_S$, corresponding to ``row-edges,'' ``column-edges,'' and ``symbol-edges,'' respectively. 

\begin{claim}
The induced subgraph $\Gamma_i := \Gamma[D_i]$ is isomorphic to the M\"{o}bius ladder $M_n$ for every $i \in [q]$. 
\end{claim}
Expressing indices modulo $n$, let $A_j := L_{j,j+2i}$ and $B_j := L_{j,j+2i+1}$ for every $j \in [n]$. It is not hard to see that the only row-edges in $\Gamma_i$ are $\{A_jB_j \,:\, j \in [n]\}$ (see Figure \ref{xandy}). Similarly, the only column-edges in $\Gamma_i$ are $\{A_{j+1}B_j\,:\, j \in [n]\}$. Thus, the vertex sequence
\begin{equation}\label{ham}A_0,B_0,A_1,B_1,\ldots,A_{n-1},B_{n-1}\end{equation}
corresponds to a Hamilton cycle in $\Gamma_i$ that uses all of the edges in $E(\Gamma_i) \cap (E_R \cup E_C)$. To prove the claim, it is left to show that $E(\Gamma_i) \cap E_S$ contains exactly $n$ edges, each of which connects opposite vertices (i.e.\! vertices at distance $q$) in the cycle given by \eqref{ham}.

Given an integer $z$, let $\overline{z}$ be the corresponding residue modulo $t$. It follows from \eqref{mainresorder} that, for every $j \in [n]$, 
\begin{equation} S(A_j) = \left(\overline{2j+2i},h_j+h_{j+2i \modulo{m}}\right) \text{ and } S(B_j) = \left(\overline{2j+2i+1}, h_j+h_{j+2i+1\modulo{m}}\right). \label{aidef}\end{equation}
Because $t$ is even, for every $j,k \in [n]$ the first coordinates of $S(A_j)$ and $S(B_k)$ have different values modulo 2. Thus, $E_S$ contains no edges of the form $A_jB_k$. 

Fix $j \in [n]$ and suppose there is some nonzero $x \in [n]$ such that $S(A_j)=S(A_{j+x})$. It follows from \eqref{aidef} that $2j + 2i \equiv 2j + 2x + 2i \modulo{t}$. Recalling that $t = 2^l$, we may conclude that $x$ is divisible by $2^{l-1}$. On the other hand, \eqref{aidef} also implies $h_j+h_{j+2i} = h_{j+x }+h_{j+x+2i}$ (where indices are here expressed modulo $m$). Applying Lemma \ref{techlemma} with $c=0$, $d=2i$, and $s=1$, we have $j \equiv j+x \modulo{m}$, so that $x$ is divisible by $m$. Because $m$ is odd, the only nonzero integer in $[n]$ which is divisible by both $m$ and $2^{l-1}$ is $q = m2^{l-1}$. 

Observing that $S(A_{j}) = S(A_{j+q})$ for every $j \in [n]$, we see that each $A_j$ is incident to exactly one edge in $ E_S \cap E(\Gamma_i)$: the edge connecting it to the opposite vertex in the cycle given by \eqref{ham}. A similar argument shows that, for every $j \in [n]$, the only edge in $ E_S \cap E(\Gamma_i)$ incident to $B_j$ is $B_jB_{j+q}$. This establishes the claim.

We complete the proof (of Theorem \ref{abelianmainresult}) by finding a pair of independent sets $X,Y \subseteq V$ such that $\Gamma_i^\prime = \Gamma[D_i\setminus(X \cup Y)]$ is bipartite for every $i \in [q]$. Given such $X$ and $Y$, we properly $(n+2)$-color $\Gamma$ using a distinct pair of colors for each of the $\frac{n}{2}+1$ sets $D_0^\prime,D_1^\prime,\ldots,D_{q-1}^\prime,X \cup Y$. 

Towards a definition of $X$ and $Y$, let
\begin{equation} k:= \left\lceil \frac{n}{4} \right\rceil \text{ and } (q_0,q_1) := \begin{cases} (q,q+1)  &\mbox{if } q\equiv 0 \modulo{3}, \\ (q-1,q) &\mbox{otherwise}. \end{cases}\label{kqq}\end{equation}
Keeping in mind that indices are here considered modulo $n$, for each $i \in [k]$ let
\begin{equation}  x_i := L_{i,\,3i},\, x_i^\prime := L_{q_0+i,\,q_1+3i}, \text{ and } X := \{x_i,x_i^\prime \,:\, i \in [k]\}. \label{defnx}\end{equation}
Similarly, for every $j \in [q-k]$, define
\begin{equation}  y_j := L_{j,\,3j+2k},\, y_j^\prime := L_{q_0+j,\,q_1+3j+2k}, \text{ and } Y := \{y_j,y_j^\prime \,:\, j \in [q-k]\}. \label{defny}\end{equation}
Figure \ref{xandy} exhibits $X$ and $Y$ for the group $\mathbb{Z}_2 \times \mathbb{Z}_3 \times \mathbb{Z}_3$. Observe that $x_i \in T_{2i} \subseteq D_i$ and $x_i^\prime \in T_{2i+1} \subseteq D_i$ for each $i \in [k]$. Similarly, $y_j,y_j^\prime  \in  D_{k+j}$  for each $j \in [q-k]$. 

Recall that the edges on the rim of $\Gamma_i$ are exactly $E(\Gamma_i) \cap \left( E_R \cup E_C\right)$. It follows from the definition of $D_i$ that the shortest path from $x_i$ to $x_i^\prime$ along the rim of $\Gamma_i$ has length $n-1$. Similarly, the shortest path from $y_j$ to $y_j^\prime$ along the rim of $\Gamma_{k+j}$ has length $n-1$. Thus, $x_i,x_i^\prime$ and $y_j,y_j^\prime$ are near-antipodal pairs for every $i,j \in [k]$. Proposition \ref{greedymob} then implies that $\Gamma_i^\prime$ is bipartite for every $i \in [q]$. 

\begin{figure}
\begin{center}
\newcommand{\overred}[1]{{{\scriptsize{\textcolor{red}{\emph{$\widetilde{#1}$}}}}}}
\newcommand{\underblue}[1]{{\footnotesize{{\textcolor{blue}{{\emph{#1}}}}}}}
\newcommand{\blank}[1]{\textcolor{white}{#1}}
\def\arraystretch{1.05}
\adjustbox{scale=.8}{
\begin{tabular}
{|@{\hspace{2pt}}c@{\hspace{2pt}}|@{\hspace{2pt}}c@{\hspace{2pt}}|@{\hspace{2pt}}c@{\hspace{2pt}}|@{\hspace{2pt}}c@{\hspace{2pt}}|@{\hspace{2pt}}c@{\hspace{2pt}}|@{\hspace{2pt}}c@{\hspace{2pt}}|@{\hspace{2pt}}c@{\hspace{2pt}}|@{\hspace{2pt}}c@{\hspace{2pt}}|@{\hspace{2pt}}c@{\hspace{2pt}}|@{\hspace{2pt}}c@{\hspace{2pt}}|@{\hspace{2pt}}c@{\hspace{2pt}}|@{\hspace{2pt}}c@{\hspace{2pt}}|@{\hspace{2pt}}c@{\hspace{2pt}}|@{\hspace{2pt}}c@{\hspace{2pt}}|@{\hspace{2pt}}c@{\hspace{2pt}}|@{\hspace{2pt}}c@{\hspace{2pt}}|@{\hspace{2pt}}c@{\hspace{2pt}}|@{\hspace{2pt}}c@{\hspace{2pt}}|}
  \hline
\overred{\mathbf{000}}&\textbf{101}&002&110&011&112&020&121&022&100&\underblue{001}&102&010&111&012&120&021&122\\ \hline
101&\textbf{002}&\textbf{100}&\overred{011}&112&010&121&022&120&001&102&000&111&\underblue{012}&110&021&122&020\\ \hline
002&100&\textbf{001}&\textbf{112}&010&111&\overred{022}&120&021&102&000&101&012&110&011&122&\underblue{020}&121\\ \hline
110&\underblue{011}&112&\textbf{020}&\textbf{121}&022&100&001&102&\overred{010}&111&012&120&021&122&000&101&002\\ \hline
011&112&010&121&\textbf{022}&\textbf{120}&001&102&000&111&012&110&\overred{021}&122&020&101&002&100\\ \hline
112&010&111&022&120&\textbf{021}&\textbf{102}&000&101&012&110&011&122&020&121&002&100&001\\ \hline
020&121&022&100&001&102&\textbf{010}&\textbf{111}&012&120&021&122&000&101&002&110&011&112\\ \hline
121&022&120&001&102&000&111&\textbf{012}&\textbf{110}&021&122&020&101&002&100&011&112&010\\ \hline
022&120&021&102&000&101&012&110&\textbf{011}&\textbf{122}&020&121&002&100&001&112&010&111\\ \hline
100&001&\underblue{102}&010&111&012&120&021&122&\textbf{000}&\overred{\mathbf{101}}&002&110&011&112&020&121&022\\ \hline
001&102&000&111&012&\underblue{110}&021&122&020&101&\textbf{002}&\textbf{100}&011&\overred{112}&010&121&022&120\\ \hline
102&000&101&012&110&011&122&020&\underblue{121}&002&100&\textbf{001}&\textbf{112}&010&111&022&\overred{120}&021\\ \hline
010&\overred{111}&012&120&021&122&000&101&002&110&011&\underblue{112}&\textbf{020}&\textbf{121}&022&100&001&102\\ \hline
111&012&110&021&\overred{122}&020&101&002&100&011&112&010&121&\textbf{022}&\textbf{120}&001&102&000\\ \hline
012&110&011&122&020&121&002&100&001&112&010&111&022&120&\textbf{021}&\textbf{102}&000&101\\ \hline
120&021&122&000&101&002&110&011&112&020&121&022&100&001&102&\textbf{010}&\textbf{111}&012\\ \hline
021&122&020&101&002&100&011&112&010&121&022&120&001&102&000&111&\textbf{012}&\textbf{110}\\ \hline
\textbf{122}&020&121&002&100&001&112&010&111&022&120&021&102&000&101&012&110&\textbf{011}\\ \hline
\end{tabular}
}
\caption{A Cayley table of $\mathbb{Z}_2 \times \mathbb{Z}_3 \times \mathbb{Z}_3$ with elements of \overred{X}, \underblue{Y}, and $\mathbf{D_0}$ highlighted.}\label{xandy}
\end{center}
\end{figure}

It remains to show that $X$ and $Y$ are independent sets in $\Gamma$. We begin by showing that there are no row-edges and no column-edges between cells in $X$. Recalling the definitions in \eqref{projmaps} and \eqref{defnx}, we see that the multiset of row-indices of cells in $X$ is
\[R(X) = [k] \cup \{q_0+i \,:\, i \in [k]\}.\]
But, having assumed $n \geq 4$, we have $k-1 < q_0$ and $q_0+k-1 <n$. It follows that $R(X)$ is \emph{simple}--it contains no repeated entries. Now, define 
\begin{equation*}\widehat{X} := \{x_i \,:\, i \in [k]\}\text{ and }X^\prime := \{x_i^\prime \,:\, i \in [k]\}.\label{sepx}\end{equation*}
Again looking to \eqref{defnx}, we see that
 \[ C(\widehat{X}) = \{3i \,:\, i \in [k]\}  \text{ and } C(X^\prime)= \{q_1+3i \modulo{n} \,:\, i \in [k]\}.\]
 Because $3(k-1) < n$, both $C(\widehat{X})$ and $C(X^\prime)$ are simple sets. Thus, $C(X) = C(\widehat{X}) \cup C(X^\prime)$ is simple unless $C(\widehat{X}) \cap C(X^\prime) \neq \emptyset$. 
 Suppose there were some $x \in C(\widehat{X}) \cap C(X^\prime)$. As $x \in C(\widehat{X})$, there is an $i_0 \in [k]$ such that $x = 3i_0$, which implies $x \equiv 0 \modulo{3}$. On the other hand, $x \in C(X^\prime)$ means $x = q_1+3i_1 \modulo{n}$ for some $i_1 \in [k]$.  We claim that this implies $x \not\equiv 0 \modulo{3}$, yielding a contradiction. 
 
To prove this claim, first suppose that $n$ is a multiple of 3. In this case $q$ is also divisible by 3, and \eqref{kqq} tells us that $q_1 = q+1 \equiv 1 \modulo{3}$. But then $x \equiv q_1 + 3i_1 \equiv 1 \modulo{3}$. So, we may assume $n$ is not divisible by 3. In this case  \eqref{kqq} tells us that $q_1 = q \not\equiv 0 \modulo{3}$. When $q +3i_1 < n$ this implies $x = q+3i_1 \not\equiv 0 \modulo{3}$, while when $q+3i_1 \geq n$ we have
 \[x = q + 3i_1 - n \equiv q-n = -q \not\equiv 0 \modulo{3}.\]

Now, observe that $X$ and $Y$ have the same ``shape'' in $L$ in the sense that $R(Y) \subseteq R(X)$ and $C(Y) \subseteq \{c+2k \,:\, c \in C(X)\}$. Thus, having shown that $R(X)$ and $C(X)$ are simple, we may conclude that $R(Y)$ and $C(Y)$ are also simple. In other words, there are no row-edges or column-edges between cells in $Y$. 
 
We next show that $S(X)$ is a simple set. From here to the end of the proof indices are expressed modulo $m$. Recalling \eqref{kqq}, observe that $q_0+q_1 \in \{n-1,n+1\}$. Combinined with \eqref{mainresorder}, \eqref{defnx}, and the fact that $t$ divides $n$, this implies the existence of some $w \in \{-1,1\}$ such that, for every $i \in [k]$, 
\begin{equation*} S(x_i) = (\overline{4i} ,h_{i}+h_{3i}) \text{ and } S(x_i^\prime) = (\overline{4i+w} , h_{i+q_0} + h_{3i+q_1}).\label{sxdef}\end{equation*}
Considering the parity of entires in the first coordinate, we immediately see that $S(x_i) \neq S(x_j^\prime)$ for every $i,j \in [k]$. Thus, $S(\widehat{X}) \cap S(X^\prime) = \emptyset$. 

To see that $S(\widehat{X})$ is simple, consider $x_i,x_j \in \widehat{X}$ such that $S(x_i) = S(x_j)$. We then have $h_i+h_{3i} = h_j + h_{3j}$, and applying Lemma \ref{techlemma} with $c=d=0$ and $s=3$ tells us that $i \equiv j \modulo{m}$. We also have 
\begin{equation}
4i \equiv 4j \modulo{t}.
\label{fours}\end{equation}
Suppose $t \leq 4$. In this case \eqref{fours} is trivially satisfied. However, because $0 \leq i,j \leq k-1$,
\[ |i-j| < k =  \left\lceil \frac{mt}{4} \right\rceil \leq \left\lceil \frac{m4}{4} \right\rceil = m.\]
As distinct numbers are congruent modulo $m$ only if their difference is at least $m$, we may conclude that $i=j$. So, recalling that $t = 2^l$ for some integer $l \geq 1$, we may assume $t \geq 8$. It then follows from \eqref{fours} that $i -j \equiv 0 \modulo{2^{l-2}}$. Because $m$ is odd, $\gcd\left(m,2^{l-2}\right)=1$ and the Chinese Remainder Theorem tells us that $x=0$ is the unique $x \in [2^{l-2}m]$ satisfying $x \equiv 0 \modulo{2^{l-2}}$ and $x \equiv 0 \modulo{m}$. But we have just shown that $|i-j| \equiv 0 \modulo{m}$ and $|i-j| \equiv 0 \modulo{2^{l-1}}$. Thus, as $0 \leq |i-j| < k = 2^{l-2}m$, we have $i=j$.

A similar argument shows that $S(X^\prime)$ is simple. Indeed, when $S(x_i^\prime) = S(x_j^\prime)$, applying Lemma \ref{techlemma} with $c = q_0$, $d=q_1$, and $s = 3$ yields $i \equiv j \modulo{m}$, while $4i + w \equiv 4j + w \modulo{t}$ implies $4i \equiv 4j \modulo{t}$. From here we may proceed exactly as above. 

The proof that $S(Y)$ is simple is nearly identical. By \eqref{mainresorder} and \eqref{defny}, there is some $w \in \{-1,1\}$ such that
\[S(y_i) = (\overline{4i+2k},h_i+h_{3i+2k}) \text{ and } S(y_i^\prime) = (\overline{4i+2k +w},h_{i+q_0}+h_{3i+q_1+2k})\]
for every $i \in [q-k]$. Considering the parity of entries in the first coordinate, we see $S(\widehat{Y}) \cap S(Y^\prime) = \emptyset$. We then check that $S(\widehat{Y})$ and $S(Y^\prime)$ are both simple by applying Lemma \ref{techlemma} and noting that $4i+z \equiv 4j + z \modulo{t}$ if and only if $4i \equiv 4j \modulo{t}$ for every $z \in \mathbb{Z}$. 
\end{proof}

Following directly from this esult is Theorem \ref{abelianchrom}, which states that every finite Abelian group $G$ of order $n$ satisfies
\begin{equation*} \chi(G) = \begin{cases} n &\mbox{if } Syl_2(G) \text{ is either trivial or non-cyclic,}\\ n+2 &\mbox{otherwise}. \end{cases}\end{equation*}

\begin{proof}[Proof of Theorem \ref{abelianchrom}]
If $Syl_2(G)$ is trivial or non-cyclic, Theorem \ref{bigequiv} tells us that $\chi(G)=n$. Otherwise, Theorem \ref{bigequiv} tells us that $\chi(G) \geq n+2$, from which Theorem \ref{abelianmainresult} implies $\chi(G)=n+2$.
\end{proof}

\section{A general upper bound \label{generalbd}}

Consider a finite group $G$ and a normal subgroup $H \triangleleft G$. In \cite{HallPaige1955}, Hall and Paige showed that a sufficient condition for the existence of a transversal in $L(G)$ is that both $L(H)$ and $L(G/H)$ possess transversals. This turns out to be a special case of a more general result concerning colorings of finite group Cayley tables. 

Our proof of this fact relies upon a modification the mappings $R$, $C$, and $S$---introduced just before Theorem \ref{abelianmainresult}---which map sets of cells in a latin square to \emph{multi}sets of rows indices,  column indices, and symbols, respectively. Given a multiset $X$, let $\Supp(X)$ be the underlying simple set. For every set of cells $X \subseteq L(G)$, we set $R^\prime(X) := \Supp(R(X))$, $C^\prime(X) := \Supp(C(X))$, and $S^\prime(X) := \Supp(S(X))$. 

\begin{lemma}\label{quotientbnd}
Let $G$ be a finite group and let $H \triangleleft G$ be a normal subgroup. Then
\[ \chi(G) \leq \chi(H)\chi(G/H).\]
\end{lemma}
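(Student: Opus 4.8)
The plan is to build an explicit proper coloring of $\Gamma := \Gamma(G)$ using at most $\chi(H)\cdot\chi(G/H)$ colors, by overlaying a \emph{coarse} coloring coming from the quotient $G/H$ with a \emph{fine} coloring coming from $H$. Write $q := [G:H]$ and fix coset representatives, so the elements of $G$ are grouped into the $q$ cosets of $H$. Grouping the rows and columns of $L := L(G)$ by their cosets partitions $L$ into a $q \times q$ array of $|H| \times |H|$ \emph{blocks}. Because $H$ is normal, coset multiplication is well defined, so every entry of the block in position $(aH,bH)$ lies in the single coset $abH$. Consequently the assignment sending a block to the coset of its entries reproduces precisely the Cayley table $L(G/H)$ at the level of blocks, and the pattern of adjacencies among blocks is exactly $\Gamma(G/H)$.

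First I would record the structure of a single block. Fixing representatives $a,b$, a typical entry of block $(aH,bH)$ is $(ah_1)(bh_2) = ab\bigl((b^{-1}h_1b)h_2\bigr)$, and since $b^{-1}h_1b \in H$, the assignment $(h_1,h_2)\mapsto (b^{-1}h_1b)h_2$ realizes the block, after translating symbols by $ab$, as a latin square obtained from $L(H)$ by permuting its rows via the inner automorphism induced by $b$. Hence each block is isotopic to $L(H)$, and as $\chi$ is an isotopy invariant each block admits a proper coloring into $\chi(H)$ partial transversals; crucially, the block-rows, block-columns, and block-symbols of such a coloring are genuine rows, columns, and symbols of $L$.

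The combination step is then as follows. Fix a proper $\chi(G/H)$-coloring $\kappa$ of $L(G/H)$, viewed as a coloring of the $q\times q$ array of blocks, and for each block $B$ fix a proper $\chi(H)$-coloring $\lambda_B$ into partial transversals, with all $\lambda_B$ drawing from a common palette. Color each cell $c$ lying in block $B$ with the pair $\bigl(\kappa(B),\lambda_B(c)\bigr)$; this uses $\chi(H)\chi(G/H)$ colors. It remains to verify that each color class $X$ is a partial transversal, i.e.\! that $R(X)$, $C(X)$, and $S(X)$ are simple. Given two cells $c,c'$ of a common color $(\alpha,\beta)$: if they lie in the same block $B$, then $\lambda_B(c)=\lambda_B(c')=\beta$ forces distinct block-rows, block-columns, and block-symbols, hence distinct rows, columns, and symbols of $L$; if instead $c\in B$ and $c'\in B'$ with $B\neq B'$, then $\kappa(B)=\kappa(B')=\alpha$ means $B$ and $B'$ are non-adjacent in $\Gamma(G/H)$, so they occupy distinct block-rows, distinct block-columns, and entry-cosets $abH \neq a'b'H$. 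Since distinct cosets are disjoint, $c$ and $c'$ then lie in distinct rows, distinct columns, and carry symbols in different cosets. In either case $c$ and $c'$ are non-adjacent, so $\chi(G)\le \chi(H)\chi(G/H)$.

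The main obstacle I anticipate is not any single hard estimate but the bookkeeping in this final verification: one must check carefully that the coarse coloring's separation at the block level (no shared block-row, block-column, or entry-coset) lifts to genuine separation of rows, columns, and symbols in $L(G)$. This is exactly where normality of $H$, guaranteeing $aH\cdot bH = abH$, and the disjointness of distinct cosets are used, and stating it cleanly in the paper's $R^\prime,C^\prime,S^\prime$ notation is the most delicate point. As a sanity check, when both $L(H)$ and $L(G/H)$ possess transversals we have $\chi(H)=|H|$ and $\chi(G/H)=|G/H|$, so the bound yields $\chi(G)\le |G|$ and hence a transversal of $L(G)$, recovering the Hall--Paige result.
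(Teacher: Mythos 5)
Your proposal is correct and follows essentially the same route as the paper: the same coset block decomposition, the same identification of the block pattern with $L(G/H)$, the same observation that normality makes each block a copy of $L(H)$ up to isotopy (the paper phrases this as an explicit isomorphism $\Gamma(A_{ij})\cong\Gamma(H)$ via the permutation $\pi$ with $h_r f_j = f_j h_{\pi(r)}$), and the same product coloring with the same two-case verification of properness.
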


\begin{proof}

Letting $n:=|G|$ and $m:=|H|$, set $k:= \frac{n}{m}$. We begin by constructing a block representation 
\begin{equation} L(G) = \begin{pmatrix} A_{00} & A_{01} & \cdots & A_{0,k-1} \\ A_{10} & A_{11} & \cdots & A_{1,k-1} \\ \vdots & \vdots & \ddots & \vdots \\ A_{k-1,0} & A_{k-1,1} & \cdots & A_{k-1,k-1}\end{pmatrix}\label{blockrep}\end{equation}
in which each block $A_{ij}$, for $i,j \in [k]$, is a latin subsquare satisfying $\chi(A_{ij}) = \chi(H)$. Let $\{f_0,f_1,\ldots,f_{k-1}\}$ be a collection of coset representatives for $H$ in $G$, so that $G/H = \{f_0H,f_1H,\ldots,f_{k-1}H\}$. We may assume that $f_0$ is the identity element of $G$. To build the block representation \eqref{blockrep}, fix an ordering of $H = \{h_0,h_1,\ldots,h_{m-1}\}$ and order the rows and columns of $L(G)$ by 
\begin{equation}\label{roworder} h_0,h_1,\ldots,h_{m-1},f_1h_0,\ldots,f_1h_{m-1},f_2h_0,\ldots, f_2h_{m-1}, \ldots, f_{k-1}h_0,\ldots,f_{k-1}h_{m-1}.\end{equation}
Fixing arbitrary $i,j \in [k]$, we define $A_{ij}$ as the unique $m \times m$ subsquare of $L(G)$ satisfying 
\[R^\prime(A_{ij}) = \{im+x \,:\, x \in [m]\} \text{ and }C^\prime(A_{ij}) = \{jm+x \,:\, x\in [m]\}.\]
Because $H$ is normal in $G$ there is a permutation $\pi \in S_m$ such that $h_rf_j = f_jh_{\pi(r)}$ for every $r \in [m]$. Thus $S^\prime(A_{ij}) = f_if_jH$, and it follows that $A_{ij}$ is a latin subsquare of $L(G)$. 

To establish $\chi(A_{ij}) = \chi(H)$, we provide a graph isomorphism between $\Gamma(A_{ij})$ and $\Gamma(H)$. Indeed, for every $v_{ab} = (a,b,h_ah_b) \in V(\Gamma(H))$, let $\phi(v_{ab}) = (\pi^{-1}(a),b,f_if_jh_ah_b) \in V(\Gamma(A_{ij}))$. It then follows from the definition of a group that the triples $v_{ab} = (a,b,h_ah_b)$ and $v_{cd} = (r,s,h_rh_s)$ match in exactly one coordinate if and only if the corresponding triples $\phi(v_{ab})$ and $\phi(v_{rs})$ match in exactly one coordinate.

Let $K$ be the $k \times k$ array formed from \eqref{blockrep} by identifying blocks with the symbols therein contained. Having shown above that $S^\prime(A_{ij}) = f_if_jH$, we see that $K$ is a latin square which is equivalent to the Cayley table $L(G/H)$. Letting $y := \chi(G/H)$, we may select some $y$-coloring $f_\infty:K \rightarrow [y]$. Furthermore, letting $x:= \chi(H)$, we may also select an $x$-coloring $c_{ij}: A_{ij} \rightarrow [x]$. 

We now use the colorings $f_\infty$ and $\{f_{ij} \,:\, i,j \in [k]\}$ to construct $(xy)$-coloring of $L$, say $f: L \rightarrow [x] \times [y]$. For each $i,j \in [k]$ and for every cell $c \in A_{ij}\subseteq L$, set $f(c) := (f_\infty(A_{ij}),f_{ij}(c))$. See Figure \ref{dicyclic3step} for an example of a color class of $f$ when $G = \langle h,s \,|\, h^3 = s^4 = 1, s^{-1}hs=h^{-1}\rangle$ is the dicyclic group of order 12 and $H=\mathbb{Z}_3$. To see that $f$ is indeed a proper coloring, consider $c,c^\prime \in L$ such that $f(c) = f(c^\prime)$. Because $f_\infty$ is a proper coloring, $c$ and $c^\prime$ cannot lie in adjacent blocks of $V(\Gamma(K))$. They could lie in the same block, say $A_{ij}$, but because $f_{ij}$ is also a proper coloring, it is nonetheless impossible for $c$ and $c^\prime$ to be adjacent in $\Gamma(L)$.
\end{proof}

\begin{figure}
\begin{center}
\def\arraystretch{1.15}
\adjustbox{scale=1.1}{
\begin{tabular}
{|@{\hspace{2pt}}c@{\hspace{2pt}}|@{\hspace{2pt}}c@{\hspace{2pt}}|@{\hspace{2pt}}c@{\hspace{2pt}}||@{\hspace{2pt}}c@{\hspace{2pt}}|@{\hspace{2pt}}c@{\hspace{2pt}}|@{\hspace{2pt}}c@{\hspace{2pt}}||@{\hspace{2pt}}c@{\hspace{2pt}}|@{\hspace{2pt}}c@{\hspace{2pt}}|@{\hspace{2pt}}c@{\hspace{2pt}}||@{\hspace{2pt}}c@{\hspace{2pt}}|@{\hspace{2pt}}c@{\hspace{2pt}}|@{\hspace{2pt}}c@{\hspace{2pt}}|}
  \hline
$\mathbf{1}$&$h$&$h^2$&$s$&$sh$&$sh^2$&$s^2$&$s^2h$&$s^2h^2$&$s^3$&$s^3h$&$s^3h^2$\\ \hline
$h$&$\mathbf{h^2}$&$1$&$sh^2$&$s$&$sh$&$s^2h$&$s^2h^2$&$s^2$&$s^3h^2$&$s^3$&$s^3h$\\ \hline
$h^2$&$1$&$\mathbf{h}$&$sh$&$sh^2$&$s$&$s^2h^2$&$s^2$&$s^2h$&$s^3h$&$s^3h^2$&$s^3$\\ \hline\hline
$s$&$sh$&$sh^2$&$\mathbf{s^2}$&$s^2h$&$s^2h^2$&$s^3$&$s^3h$&$s^3h^2$&$1$&$h$&$h^2$\\ \hline
$sh$&$sh^2$&$s$&$s^2h^2$&$s^2$&$\mathbf{s^2h}$&$s^3h$&$s^3h^2$&$s^3$&$h^2$&$1$&$h$\\ \hline
$sh^2$&$s$&$sh$&$s^2h$&$\mathbf{s^2h^2}$&$s^2$&$s^3h^2$&$s^3$&$s^3h$&$h$&$h^2$&$1$\\ \hline\hline
$s^2$&$s^2h$&$s^2h^2$&$s^3$&$s^3h$&$s^3h^2$&$1$&$h$&$h^2$&$\mathbf{s}$&$sh$&$sh^2$\\ \hline
$s^2h$&$s^2h^2$&$s^2$&$s^3h^2$&$s^3$&$s^3h$&$h$&$h^2$&$1$&$sh^2$&$s$&$\mathbf{sh}$\\ \hline
$s^2h^2$&$s^2$&$s^2h$&$s^3h$&$s^3h^2$&$s^3$&$h^2$&$1$&$h$&$sh$&$\mathbf{sh^2}$&$s$\\ \hline\hline
$s^3$&$s^3h$&$s^3h^2$&$1$&$h$&$h^2$&$s$&$sh$&$sh^2$&$s^2$&$s^2h$&$s^2h^2$\\ \hline
$s^3h$&$s^3h^2$&$s^3$&$h^2$&$1$&$h$&$sh$&$sh^2$&$s$&$s^2h^2$&$s^2$&$s^2h$\\ \hline
$s^3h^2$&$s^3$&$s^3h$&$h$&$h^2$&$1$&$sh^2$&$s$&$sh$&$s^2h$&$s^2h^2$&$s^2$\\ \hline
\end{tabular}
}
\caption{$L(Dic_{3})$, divided into blocks as per \eqref{blockrep}, with a color class from the proof of Lemma \ref{quotientbnd} in bold.}\label{dicyclic3step}
\end{center}
\end{figure}

Recall from Theorem \ref{bigequiv} that, in determining an upper bound for the chromatic number of \emph{all} finite groups, we need only consider groups whose Sylow 2-subgroups are nontrivial and cyclic. The following structural theorem for such groups was observed in \cite{HallPaige1955} as a direct corollary of a classical result due to Burnside (\cite{HallBook} Theorem 14.3.1).
\begin{lemma}
Let $G$ be a finite group and let $P$ be a Sylow 2-subgroup of $G$. If $P$ is cyclic and nontrivial then there is a normal subgroup of odd order $H \triangleleft G$ for which $G/H \cong P$.
\label{sdp}\end{lemma}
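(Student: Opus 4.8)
The plan is to derive this directly from Burnside's normal $p$-complement theorem (the cited classical result of Burnside), applied with $p = 2$. Recall that this theorem guarantees a \emph{normal $p$-complement}---a normal subgroup $H \triangleleft G$ with $G = HP$ and $H \cap P = 1$---whenever $P$ lies in the center of its own normalizer, equivalently whenever $N_G(P) = C_G(P)$. Since $P$ is a Sylow 2-subgroup, any such $H$ automatically has odd order $|G|/|P|$, and the second isomorphism theorem then gives $G/H = HP/H \cong P/(P \cap H) \cong P$. So the whole task reduces to verifying the single hypothesis $N_G(P) = C_G(P)$.

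To verify it, I would first observe that because $P$ is cyclic it is abelian, so $P \le C_G(P)$. Thus $C_G(P)$ contains a full Sylow 2-subgroup of $G$, whence its index $[G : C_G(P)]$ is odd; in particular $[N_G(P):C_G(P)]$, which divides $[G:C_G(P)]$, is odd. On the other hand, the standard embedding $N_G(P)/C_G(P) \hookrightarrow \mathrm{Aut}(P)$ shows that $[N_G(P):C_G(P)]$ divides $|\mathrm{Aut}(P)|$. Here the hypothesis that $P$ is a cyclic \emph{2}-group is decisive: $\mathrm{Aut}(\mathbb{Z}_{2^a})$ has order $2^{a-1}$, a power of $2$. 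Hence $[N_G(P):C_G(P)]$ is simultaneously odd and a power of $2$, forcing $[N_G(P):C_G(P)] = 1$, i.e.\! $N_G(P) = C_G(P)$, as required.

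I do not expect a serious obstacle here beyond correctly assembling these two divisibility facts. The only genuine content is the numerical coincidence that the automorphism group of a cyclic $2$-group is itself a $2$-group, which is precisely what makes the \emph{cyclic} (as opposed to merely abelian) hypothesis pay off---for a general abelian $P$ one would have no control over the parity of $|\mathrm{Aut}(P)|$. Once $N_G(P) = C_G(P)$ is established, a single invocation of Burnside's theorem yields the odd-order normal subgroup $H$ with $G/H \cong P$, completing the argument.
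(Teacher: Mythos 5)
Your proposal is correct and is exactly the argument the paper has in mind: the paper gives no proof of its own, simply citing the lemma as a direct corollary of Burnside's normal $p$-complement theorem (Hall, Theorem 14.3.1), and your write-up is the standard verification of Burnside's hypothesis $N_G(P)=C_G(P)$ via the embedding $N_G(P)/C_G(P)\hookrightarrow \mathrm{Aut}(P)$ together with the facts that this index is odd and that $|\mathrm{Aut}(\mathbb{Z}_{2^a})|=2^{a-1}$. The divisibility details, including the final application of the second isomorphism theorem to get $G/H\cong P$, are all correct.
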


Combining Lemmas \ref{quotientbnd} and \ref{sdp}, we obtain an upper bound for $\chi(G)$ which depends only upon the largest power of 2 dividing $|G|$. 

\begin{theorem}\label{blocklemma}
Let $l$ and $m$ be nonnegative integers such that $m$ is odd, and let $t = 2^l$. If $G$ is a group of order $n=mt$, then 
\[\chi(G) \leq \frac{t+2}{t}n= n+\frac{2n}{t}.\]
\end{theorem}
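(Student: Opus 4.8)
The plan is to combine the two lemmas immediately at hand. Let $G$ have order $n = mt$ with $m$ odd and $t = 2^l$. I first dispose of the easy cases. If $t = 1$, then the Sylow 2-subgroup is trivial, so Theorem \ref{bigequiv} gives $\chi(G) = n \leq n + 2n = (t+2)n/t$, and we are done. If $l \geq 1$ but the Sylow 2-subgroups of $G$ are non-cyclic, then again Theorem \ref{bigequiv} gives $\chi(G) = n$, which comfortably satisfies the bound. So the only interesting case is when $t = 2^l$ with $l \geq 1$ and the Sylow 2-subgroups are cyclic (and nontrivial).

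In that remaining case, I would invoke Lemma \ref{sdp}: since a Sylow 2-subgroup $P$ is cyclic and nontrivial, there is a normal subgroup $H \triangleleft G$ of odd order with $G/H \cong P$. Note $|H| = n/t = m$ is odd and $|G/H| = |P| = t$. Now apply the submultiplicativity of Lemma \ref{quotientbnd} to get
\[ \chi(G) \leq \chi(H)\,\chi(G/H). \]
The factor $\chi(H)$ is handled by Theorem \ref{bigequiv}: $H$ has odd order, so its Sylow 2-subgroup is trivial, whence $\chi(H) = |H| = m$. The factor $\chi(G/H) = \chi(P)$ is where I bound the chromatic number of the cyclic 2-group $P \cong \mathbb{Z}_t$ by $\chi(P) \leq t + 2$. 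Combining,
\[ \chi(G) \leq m\,(t+2) = \frac{t+2}{t}\,mt = \frac{t+2}{t}\,n = n + \frac{2n}{t}, \]
which is exactly the claimed inequality.

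The one genuine ingredient I need is the bound $\chi(\mathbb{Z}_t) \leq t + 2$ for $t = 2^l \geq 2$. This is where I would lean on the existing literature for circulant latin squares: the paper states (in the introduction) that Conjecture \ref{chromaticconjecture} has been verified for circulant latin squares of every order, which gives $\chi(\mathbb{Z}_t) \leq t + 2$ for even $t$. Alternatively, $\mathbb{Z}_t$ is Abelian with cyclic nontrivial Sylow 2-subgroup, so Theorem \ref{abelianmainresult} directly yields $\chi(\mathbb{Z}_t) \leq t + 2$; this keeps the argument self-contained within the present paper. Either way the bound is in hand.

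I expect the main obstacle to be purely organizational rather than mathematical: making sure the case analysis on $t$ and on the structure of $Syl_2(G)$ is clean, and correctly matching up $|H| = m$ and $|G/H| = t$ so that the product telescopes to $(t+2)n/t$. The substantive work has already been front-loaded into Lemma \ref{quotientbnd} (submultiplicativity), Lemma \ref{sdp} (the Burnside-type splitting), and the Abelian construction of Theorem \ref{abelianmainresult}; the present theorem is essentially the clean assembly of these three facts, so the proof should be short.
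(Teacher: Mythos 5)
Your proposal is correct and follows essentially the same route as the paper: reduce to the case of a nontrivial cyclic Sylow 2-subgroup via Theorem \ref{bigequiv}, split off an odd-order normal subgroup $H$ with $G/H \cong \mathbb{Z}_t$ via Lemma \ref{sdp}, and combine $\chi(H)=m$ with $\chi(\mathbb{Z}_t)\leq t+2$ (from Theorem \ref{abelianmainresult}) through the submultiplicativity of Lemma \ref{quotientbnd}. The paper's proof is exactly this assembly, so no further comment is needed.
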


\begin{proof}
We may assume $t \geq 2$ and $Syl_2(G) = \mathbb{Z}_t$, as otherwise Theorem \ref{bigequiv} implies $\chi(G) = n \leq n + \frac{2n}{t}$. Lemma \ref{sdp} then tells us that $G$ has a normal subgroup $H$ of order $m$ satisfying $G/H \cong \mathbb{Z}_t$. With $\chi(H)$ and $\chi(\mathbb{Z}_t)$ determined by Theorem \ref{bigequiv} and Theorem \ref{abelianmainresult}, respectively, it follows from Lemma \ref{quotientbnd} that
\begin{equation*}
\chi(G) \leq \chi(H)\chi(\mathbb{Z}_t) = m(t+2) = \frac{t+2}{t}n.
\end{equation*}\end{proof}

Recall that the previously best known general upper bound was $\chi(G) \leq 2n$. Theorem  \ref{blocklemma} improves significantly upon this bound for general groups whose order is divisible by large powers of 2. Indeed, as $t$ grows with respect to $m$, Theorem \ref{blocklemma} approaches the conjectured best possible bound of $\chi(G) \leq n+2$. 

Given a (full) transversal $T$ in a latin square $L$ of order $n$, there is a unique bijection $\phi:[n]\rightarrow [n]$, known as the \defn{index map of $T$}, which sends the row index $x$ column index of the unique cell in $T$ with row index $x$, so that 
\[T = \{L_{i,\phi(i)} \,:\, i \in [n]\}.\]
If $L$ is the Cayley table of a group $G = \{g_0,g_1,\ldots,g_{n-1}\}$, then $\phi \in S_n$ is the index map of some transversal if and only if $g_ig_{\phi(i)}$ is an enumeration of $G$ (or, in the language of \cite{Evans2009}, $g_i \mapsto g_{\phi(i)}$ is a \emph{complete mapping}). 

We end by proving our second main result, Theorem \ref{brooksbound}, which states that every finite group $G$ of order $n$ satisfies
\[ \chi(G) \leq \frac{3}{2}n.\]

\begin{proof}[Proof of Theorem \ref{brooksbound}]
Let $P$ be a Sylow 2-subgroup of $G$. We may assume $P = \langle p \rangle \cong \mathbb{Z}_2$; indeed, if $P$ is trivial then Theorem \ref{bigequiv} implies $\chi(G) = n \leq \frac{3}{2}n$, while if $|P| \geq 4$ then it follows from Theorem \ref{blocklemma} that
\[ \chi(G) \leq n+\frac{2n}{|P|} \leq n+\frac{2n}{4} = \frac{3}{2}n. \]
Letting $m := \frac{n}{2}$, Lemma \ref{sdp} tells us that $G$ has a normal subgroup $H$ of order $m$ satisfying  $G/H \cong P$. As in the proof of Lemma \ref{quotientbnd}, we fix an arbitrary enumeration of $H=\{h_0,h_1,\ldots,h_{m-1}\}$ and order the rows and columns of $L=L(G)$ by 
\[ h_0,h_1,\ldots,h_{m-1},ph_0,ph_1,\ldots,ph_{m-1}.\]
Breaking $L$ into four $m \times m$ subsquares, we obtain the block representation
\[   L = \begin{pmatrix} A_0&A_2\\A_1&A_3\end{pmatrix}.\]
Observe that each $A_i$ is a latin subsquare, with $A_0 = L(H)$.

Because $m = |H|$ is odd, Theorem \ref{bigequiv} implies $\chi(A_0) = m = |A_0|$. Let $\{T_0,T_1,\ldots,T_{m-1}\}$ be an $m$-coloring of $A_0$. For each $i \in [m]$, let $\phi_i$ be the index map corresponding to the transversal $T_i$, so that
\[T_i = \{L_{j,\phi_i(j)} \,:\, j \in [m]\}.\]
We want to use $\phi_i$ to define $m$-colorings of $A_1$, $A_2$, and $A_3$. Fixing an arbitrary $i \in [m]$, define the set 
\[T^\prime_i := \{ L_{m+j,\phi_i(j)} \,:\, j \in [m]\},\]
and note that $T^\prime_i \subseteq A_1$. To see that $T^\prime_i$ is a transversal of $A_1$, note that $\phi_i$ is a bijection and $ph_jh_{\phi_i(j)}$ is an enumeration of $pH$. It is then easy to check that $\{T^\prime_i \,:\, i \in [m]\}$ is an $m$-coloring of $A_1$. 

To find $m$-colorings for $A_2$ and $A_3$, we use the fact that $H$ is normal in $G$ to define a permutation $\pi \in S_m$ for which $h_jp = ph_{\pi(j)}$ for every $j \in [m]$. Fixing an arbitrary $i \in [m]$, define the map $\psi_i : [m] \rightarrow \left([2m]\setminus [m] \right)$ by 
\[ \psi_i(j) := m + \phi_i(\pi(j)) \text{ for every }j \in [m].\]
As both $\pi$ and $\phi_i$ are permutations of $[m]$, their composition is also a permutation. Thus, $\psi_i$ is a bijection. We then define the sets
\[ Q_i := \{ L_{j,\psi_i(j)} \,: \, j \in [m]\} \text{ and } Q^\prime_i = \{L_{m+j,\psi_i(j)} \,:\, j \in [m]\}.\]
Observing that $S(L_{j,\psi_i(j)}) = h_jph_{\phi_i(\pi(j))} = ph_{\pi(j)}h_{\phi_i(\pi(j))}$
and $S(L_{m+j,\psi_i(j)}) = h_{\pi(j)}h_{\phi_i(\pi(j))}$, it is easy to check that $\{Q_i\,:\, i \in [m]\}$ and $\{Q_i^\prime \,:\, i \in [m]\}$ are $m$-colorings of $A_2$ and $A_3$, respectively.

Let $\Gamma := \Gamma(L)$. Expressing indices modulo $m$, define for every $i \in [m]$ the set
\[ X_i := T_i \cup T^\prime_{i+1} \cup Q_i \cup Q^\prime_i.\]
Because $X_i$ is the union of four subsquare transversals, it contains exactly two cells from each row, column, and symbol class of $L$. Thus, the induced subgraph $\Gamma_i := \Gamma[X_i]$ is cubic. Noticing that $\{X_i \,:\, i \in [m]\}$ partitions $L$, if we can show that $\chi(\Gamma_i) \leq 3$ for every $i \in [m]$, then we may conclude that $\chi(L) \leq \frac{3}{2}n$. Fixing an arbitrary $i \in [m]$, Brooks' Theorem tells us that $\Gamma_i$ is 3-colorable unless it contains a connected component isomorphic to the complete graph $K_4$.  

Suppose we could find a connected component $\Lambda \subseteq \Gamma_i$ which is isomorphic to $K_4$. As $V(\Gamma_i)$ is the union of four independent sets---one corresponding to each subsquare $A_j$---we may assume $V(\Lambda) = \{v_j \in A_j \,:\, j \in [4]\}$. Moreover, the row and column edges of $\Lambda$ must form a 4-cycle. Thus, if $R(v_0) = j$, we must have $C(v_0) = \phi_i(j)$ and $R(v_2) = j$. It then follows from the definition of $Q_i$ that $C(v_2) = \psi_i(j)$. But this implies $C(v_3) = \psi_i(j)$, so that $R(v_3) = m+\psi_i^{-1}\left(\psi_i(j)\right) = m+j$. We then have $R(v_1) = m+j$ and $C(v_1) = \phi_{i+1}(j)$. If $v_0,v_2,v_3,v_1$ is to form a 4-cycle, we must have $C(v_0) = C(v_1)$. However, this would mean $\phi_i(j) = \phi_{i+1}(j)$, contradicting the fact that $T_i \cap T_{i+1} = \emptyset$.
\end{proof}

\bibliographystyle{plain}
\bibliography{references}

\begin{thebibliography}{10}

\bibitem{Luisetal2016}
Nazli Besharati, Luis Goddyn, Ebadollah~S. Mahmoodian, and Masoud Mortezaeefar.
\newblock On the chromatic number of {L}atin square graphs.
\newblock {\em Discrete Math.}, 339(11):2613--2619, 2016.

\bibitem{BondyMurty}
John~Adrian Bondy and U.~S.~R. Murty.
\newblock {\em Graph theory}, volume 244 of {\em Graduate Texts in
  Mathematics}.
\newblock Springer, New York, 2008.

\bibitem{CavenaghKuhl2015}
Nicholas~J. Cavenagh and Jaromy Kuhl.
\newblock On the chromatic index of {L}atin squares.
\newblock {\em Contrib. Discrete Math.}, 10(2):22--30, 2015.

\bibitem{Euler}
Leonard Euler.
\newblock Recherches sur une nouvelle espece de quarres magiques.
\newblock {\em Verh. Zeeuwsch. Gennot. Weten. Vliss.}, 9:85--239, 1782.

\bibitem{Evans2009}
Anthony~B. Evans.
\newblock The admissibility of sporadic simple groups.
\newblock {\em J. Algebra}, 321(1):105--116, 2009.

\bibitem{HalaszMScThesis}
Kevin~C. Halasz.
\newblock Coloring cayley tables of finite groups.
\newblock Master's thesis, Simon Fraser University, 2017.

\bibitem{HallPaige1955}
Marshall Hall and Lowell~J. Paige.
\newblock Complete mappings of finite groups.
\newblock {\em Pacific J. Math.}, 5:541--549, 1955.

\bibitem{HallBook}
Marshall Hall, Jr.
\newblock {\em The theory of groups}.
\newblock The Macmillan Co., New York, N.Y., 1959.

\bibitem{DenesKeedwellBook}
A.~Donald Keedwell and J\'ozsef D\'enes.
\newblock {\em Latin squares and their applications}.
\newblock Elsevier/North-Holland, Amsterdam, second edition, 2015.
\newblock With foreword to the previous edition by Paul Erd\"os.

\bibitem{PipSpencer1989}
Nicholas Pippenger and Joel Spencer.
\newblock Asymptotic behavior of the chromatic index for hypergraphs.
\newblock {\em J. Combin. Theory Ser. A}, 51(1):24--42, 1989.

\bibitem{RobinsonBook}
Derek J.~S. Robinson.
\newblock {\em A course in the theory of groups}, volume~80 of {\em Graduate
  Texts in Mathematics}.
\newblock Springer-Verlag, New York, second edition, 1996.

\bibitem{Ryser1967}
Herbert~J. Ryser.
\newblock Neuere probleme der kombinatorik.
\newblock In {\em Vortr\"age \"uber Kombinatorik Oberwolfach}, pages 69--91,
  1967.

\bibitem{Stein1975}
Sherman~K. Stein.
\newblock Transversals of {L}atin squares and their generalizations.
\newblock {\em Pacific J. Math.}, 59(2):567--575, 1975.

\bibitem{Wanless2002}
Ian~M. Wanless.
\newblock A generalisation of transversals for {L}atin squares.
\newblock {\em Electron. J. Combin.}, 9(1):Research Paper 12, 15 pp.
  (electronic), 2002.

\bibitem{WanlessWebb2006}
Ian~M. Wanless and Bridget~S. Webb.
\newblock The existence of {L}atin squares without orthogonal mates.
\newblock {\em Des. Codes Cryptogr.}, 40(1):131--135, 2006.

\bibitem{Wilcox2009}
Stewart Wilcox.
\newblock Reduction of the {H}all-{P}aige conjecture to sporadic simple groups.
\newblock {\em J. Algebra}, 321(5):1407--1428, 2009.

\end{thebibliography}

\end{document}